\newcommand{\reals}                  {\mathbb R}
\newcommand{\natint}                 {\mathbb N}
\newcommand{\ones}                   {\mathds 1}
\newcommand{\tran}                   {{\mathsf T}}                % matrix transpose
\newcommand{\inprod}[2]              {\langle #1, #2 \rangle}     % inner product
\DeclareMathOperator{\Nullspace}     {Null}                       % nullspace
\DeclareMathOperator{\linspan}       {{\bf span}}                 % span
\newcommand{\mini}                   {\text{\upshape minimize}}
\DeclareMathOperator{\dom}           {{\bf dom}}                  % domain
\DeclareMathOperator{\intr}          {{\bf int}}                  % interior
\renewcommand{\P}                    {\mathbb P}                  % probability
\newcommand{\Prob}                   {\mathbb P}                  % probability
\newcommand{\E}                      {\mathbb E}                  % expectation
\newcommand{\eg}{{\textit{e.g.}}}
\newcommand{\ie}{{\textit{i.e.}}}
\newcommand{\cE}{\mathcal{E}}
\newcommand{\cF}{\mathcal{F}}
\newcommand{\cG}{\mathcal{G}}
\newcommand{\cN}{\mathcal{N}}
\newcommand{\cV}{\mathcal{V}}
\newcommand{\bsf}{\boldsymbol{f}}
\newcommand{\bsg}{\boldsymbol{g}}
\newcommand{\bss}{\boldsymbol{s}}
\newcommand{\bsx}{\boldsymbol{x}}
\newcommand{\bsF}{\boldsymbol{F}}
\newcommand{\bsW}{\boldsymbol{W}}
\newcommand{\oline}[1]{\mkern 1.5mu\overline{\mkern-1.5mu#1}}
\renewcommand{\hbar}{\oline{h}}
\newcommand{\xbar}{\oline{x}}
\newcommand{\ftilde}{\tilde{f}}
\newcommand{\xtilde}{\tilde{x}}
\renewcommand{\mod}{\mathrm{mod}}
\renewcommand{\ones}{\mathds 1}
\newtheorem{assumption}{Assumption}
\newtheorem{theorem}{Theorem}
\newtheorem{lemma}[theorem]{Lemma}
\newtheorem{corollary}[theorem]{Corollary}
\crefname{assumption}{Assumption}{Assumptions}
\title{Accelerating gradient tracking with periodic global averaging%
\footnote{Shorter version is accepted to the 63rd IEEE Conference on Decision and Control (CDC 2024).}}
\author{Shujing Feng%
\thanks{Department of Computer Science and Engineering, Lehigh University, Email: \textsf{shf321@lehigh.edu}.}
\and Xin Jiang%
\thanks{Department of Industrial and Systems Engineering, Lehigh University, Email: \textsf{xjiang@lehigh.edu}.}
}
\date{March 17, 2024}
\begin{document}
\maketitle

\begin{abstract}
Decentralized optimization algorithms have recently attracted increasing attention due to its wide applications in all areas of science and engineering. In these algorithms, a collection of agents collaborate to minimize the average of a set of heterogeneous cost functions in a decentralized manner. State-of-the-art decentralized algorithms like Gradient Tracking (GT) and Exact Diffusion (ED) involve communication at each iteration. Yet, communication between agents is often expensive, resource intensive, and can be very slow. To this end, several strategies have been developed to balance between communication overhead and convergence rate of decentralized methods. In this paper, we introduce GT-PGA, which incorporates~GT with periodic global averaging. With the additional PGA, the influence of poor network connectivity in the GT algorithm can be compensated or controlled by a careful selection of the global averaging period. Under the stochastic, nonconvex setup, our analysis quantifies the crucial trade-off between the connectivity of network topology and the PGA period. Thus, with a suitable design of the PGA period, GT-PGA improves the convergence rate of vanilla GT. Numerical experiments are conducted to support our theory, and simulation results reveal that the proposed GT-PGA accelerates practical convergence, especially when the network is sparse.
\end{abstract}

\section{Introduction}

In decentralized optimization, a group of~$n$ agents collaborate to solve the optimization problem
\begin{equation} \label{eq:prob}
    \mini \quad f(x) := \frac{1}{n} \sum_{i=1}^{n} f_i(x),
\end{equation}
where the optimization variable is $x \in \reals^d$, and each component function $f_i(x)$ is smooth, potentially nonconvex, and held locally by agent $i \in [n]$. This problem formulation has been widely used in modeling various important applications throughout science and engineering, including optimal control, signal processing, resource allocation, and machine learning \cite{PKP06,BPC+11,CYRC12,NL18}. In particular, decentralized/distributed optimization is now prevalent in modern scenarios involving high-performance computing (HPC) resources \cite{YYH+21}.

Many decentralized methods have been proposed to solve the problem~\eqref{eq:prob}, including decentralized/distributed gradient descent (DGD) methods \cite{NO09,CS10,RNV10}, EXTRA \cite{SLWY15}, Exact-Diffusion/D$^2$/NIDS (ED) \cite{YYZS18,TLY+18,LSY19,YAYS20}, and Gradient Tracking (GT) methods \cite{XZSX15,DS16,NOS17,QL18}. Among them, DGD is arguably the conceptually simplest decentralized algorithms. At each iteration of DGD, each agent performs a local gradient step followed by a communication round. However, DGD fails to converge \textit{exactly} with constant stepsizes when the local objective functions $f_i$ are \textit{heterogeneous} \cite{CS13,YLY16} (\ie, the minimizer of $f$ is different from that of~$f_i$.).

Due to the unsatisfactory convergence results of DGD, exact methods (a.k.a.\ bias-correction methods) have been extensively studied to account for the inherent heterogeneity in problem~\eqref{eq:prob}. Among them, the family of GT algorithms have each agent perform local gradient steps with an estimate of the global gradient called the tracking variable \cite{XZSX15,DS16,NOS17,QL18}. In these methods, the bias (or error) caused by problem/data heterogeneity observed in DGD is asymptotically removed.

In decentralized methods (including both DGD and exact methods), gossip communication over the network of agents is required at each iteration of the algorithm. Very often, communication is computationally expensive and resource intensive in practice \cite{NOR18,YYH+21}. To this end, multiple local recursions (or local updates) have recently been studied in the literature. Among these methods include LocalGD \cite{Stich18,KMR20}, Scaffold \cite{KKM+20}, S-Local-GD \cite{GHR21}, FedLin \cite{MJPH21}, and Scaffnew \cite{MMSR22}. LocalGD, which is based on DGD, still suffers from the bias caused by heterogeneity, and multiple local recursions cause agents to drift towards their local solution \cite{KLB+20}. Other aforementioned methods combine bias-correction techniques with multiple local gradient updates; for example, GT with local updates (LU-GT) have recently been studied \cite{BBG23a,GC23,LLKS23,NAYU23}. Nonetheless, existing analyses fail to establish any theoretical improvement in communication complexity when the number of local updates are deterministically prescribed \cite{Alghunaim24,BBG23a,GC23,LLKS23,NAYU23}. To the best of our knowledge, the only existing analysis that theoretically establishes the benefit of local updates in LocalGD \cite{MMSR22} considers the special case where the objective is strongly convex, the true gradients $\nabla f_i$ are always accessible, and the number of local updates is randomly selected during the optimization process.

Besides local updates, the \textit{periodic global averaging} (PGA) technique has recently been developed \cite{CYZ+21} to balance the trade-off between convergence and communication in DGD. It is shown that PGA helps improve the transient stage of DGD with and without local updates \cite{CYZ+21}. In modern scenarios where high-performance data-center clusters are the computing resources, PGA is beneficial owing to efficient All-Reduce primitives \cite{PY09}. In addition, the benefit of PGA in DGD is significant when the network is large and/or sparse. However, PGA does not remove the heterogeneity bias in DGD, so DGD with PGA still does not converge exactly with constant stepsizes.

In view of the potential benefits of PGA and the undesirable performance of DGD-PGA, in this work, we incorporate periodic global averaging (PGA) into GT and propose GT-PGA. On the one hand, we show that the incorporation of PGA accelerates the convergence rate compared with vanilla GT, especially on large and/or sparse networks. On the other hand, GT-PGA also extends LU-GT (with fully connected networks) via efficient gossip communication after local updates.

Despite the promising acceleration in practical convergence, the analysis of GT-PGA is not straightforward. Even though the main recursion of GT-PGA can be regarded as a special form of GT with time-varying topologies \cite{DS16}, its convergence guarantees and practical performance cannot be fully captured by existing analyses. In particular, existing convergence results for time-varying GT rely on the spectral gap of the least connected communication network \cite{DS16}. Simply applying these results to GT-PGA does not fully explain the superiority of the PGA operation and lead to incomplete conclusions. Therefore, quantifying the benefits of PGA in GT and carefully balancing the trade-off between gossip communication and periodic global averaging require new analysis of the decentralized algorithm. 

Overall, the contributions of this paper are as follows.
\begin{itemize}
    \item We propose to incorporate periodic global averaging (PGA) into the Gradient Tracking (GT) algorithm and analyze the proposed GT-PGA under the stochastic, nonconvex setting. 
    
    \item Theoretical results are established to guarantee convergence of GT-PGA, and in particular, to quantify the crucial trade-off between network connectivity and the global averaging period. We also discuss the connection and difference between the proposed GT-PGA, vanilla GT, and LU-GT (GT with local updates).
    
    \item Numerical experiments are conducted to verify the established theoretical results. In particular, the proposed GT-PGA accelerates practical convergence compared to vanilla GT, especially when the network is large and/or sparse.
\end{itemize}
The rest of the paper is organized as follows. \cref{sec:algo} describes the proposed GT-PGA algorithm and presents the main convergence results. In \cref{sec:analysis}, we establish the convergence guarantees for GT-PGA, under the stochastic, nonconvex setting, and \cref{sec:exp} presents numerical evidence to support the theoretical results. Finally, \cref{sec:conclusion} presents concluding remarks.

\paragraph{Notation.}
Lowercase letters define vectors or scalars, uppercase letters define matrices or scalars, and boldface letters represent augmented network quantities. Let $\mathrm{col}\{a_1,\ldots,a_n\}$ denote the vector that concatenates the vectors/scalars $a_i$, and define $[n] := \{1,\ldots,n\}$ for any positive integer $n \in \natint_{\geq 1}$. The notation $\ones$ represents the all-ones vector, of which the size will be clear from context. The inner product of two vectors $x$, $y$ is denoted by $\inprod{x}{y}$. For any real $p \times q$ matrix $A$, denote its nullspace by $\Nullspace(A) := \{x \in \reals^q \mid Ax=0\}$. Products of multiple matrices are defined as
\[
    \prod_{k=i}^j A_k := \begin{cases}
        A_i A_{i+1} \cdots A_j \quad &\text{if} \ j \geq i \\
        A_i A_{i-1} \cdots A_j \quad &\text{if} \ j < i.
    \end{cases}
\]
Note that we do not assume $j \geq i$. This definition will not cause any confusion as the value of $i$ and $j$ will be clear from context.

\section{Gradient tracking with periodic global averaging} \label{sec:algo}

In this section, we present the proposed decentralized optimization algorithm for solving problem~\eqref{eq:prob}, state the assumptions needed in the analysis, and establish the main convergence result for the proposed algorithm.

In problem~\eqref{eq:prob}, the function $f_i \colon \reals^d \to \reals$ held locally by agent $i$ is smooth, potentially nonconvex, and defined as the expected value with respect to some probability space $(\Omega_i, \cF_i, \Prob_i)$; \ie,
\[
    f_i(x) := \E_{\xi_i} [F_i(x;\xi_i)], \quad \text{for all} \ i \in [n].
\]
The studied optimization algorithm only has access to stochastic gradient estimates of the true gradient of $f_i$ (see upcoming \cref{ass:noise}), and solves the problem~\eqref{eq:prob} in a decentralized manner. Its implementation involves a graph $\cG = (\cV, W, \cE)$ that models the connections between the group of~$n$ agents (\ie, $|\cV|=n$). Specifically, the element $w_{ij}$ in the matrix $W$ scales the information agent $i$ receives from agent~$j$, and $w_{ij} = 0$ if $j \notin \cN_i$, where $\cN_i$ is the set of neighbors of agent~$i$.

In this work, we incorporate periodic global averaging into the well-known Gradient Tracking (GT) algorithms \cite{DS16,NOS17} and study its convergence results. Various forms of GT exist in the literature, and the particular variant of GT considered in this paper is called Semi-ATC-TV-GT \cite{DS16}. The proposed algorithm, called Gradient Tracking with Periodic Global Averaging (GT-PGA), is listed in \cref{alg:gt}. In the gossip (\ie, decentralized communication) steps (\cref{alg:gossip} in \cref{alg:gt}), every agent~$i$ collects information from all its \textit{connected} neighbors, while for global averaging steps (\cref{alg:avg} in \cref{alg:gt}), agents synchronize their model parameters using, \eg, the efficient All-Reduce primitives \cite{PY09}. When the global averaging period $\tau \to \infty$, GT-PGA reduces to Gradient Tracking \cite{DS16} with static topology; when $W=I$, GT-PGA reduces to GT with local updates and a fully-connected network~\cite{NAYU23}.

\begin{algorithm}[tb]
\caption{Gradient Tracking with Periodic Global Averaging (GT-PGA)}
\label{alg:gt}
\begin{algorithmic}[1]
    \State Agent $i$ input: $x_i^{(0)} \in \reals^d$, stepsize $\alpha \in \reals_{>0}$, mixing matrix $W \in \reals^{n \times n}$, averaging period $\tau \in \natint_{\geq 1}$.
    \State Initialize $g_i^{(0)} = \nabla F_i(x_i^{(0)}, \xi_i^{(0)}) \in \reals^d$ for all $i \in [n]$.
    \For{$k=0,1,\ldots$}
        \For{$i=1,\ldots, n$ (in parallel)}
            \If{$\mod(k+1,\tau) = 0$} \label{alg:avg}
                \begin{align*}
                    x_i^{(k+1)} &= \frac{1}{n} \sum_{j=1}^n (x_j^{(k)} - \alpha g_j^{(k)}) \\
                    g_i^{(k+1)} &= \frac{1}{n} \sum_{j=1}^n g_j^{(k)} + \nabla F_i(x_i^{(k+1)}, \xi_i^{(k+1)}) - \nabla F_i(x_i^{(k)}, \xi_i^{(k)}).
                \end{align*}
            \Else \label{alg:gossip}
                \begin{align*}
                    x_i^{(k+1)} &= \sum_{j : (j,i)\in \cE}^{\phantom{=}} w_{ij} (x_j^{(k)} - \alpha g_j^{(k)}) \\
                    g_i^{(k+1)} &= \sum_{j : (j,i)\in \cE} w_{ij} g_j^{(k)} + \nabla F_i(x_i^{(k+1)}, \xi_i^{(k+1)}) - \nabla F_i(x_i^{(k)}, \xi_i^{(k)}).
                \end{align*}
            \EndIf
        \EndFor
    \EndFor
\end{algorithmic}
\end{algorithm}

The proposed periodic global averaging technique is efficient in situations where high-performance data-center clusters are the computing resources. In such a scenario, all GPUs are fully connected with high-bandwidth channels and the network topology can be fully controlled. Under this setting, PGA conducted with Ring All-Reduce has tolerable communication cost; see, \eg, \cite{CYZ+21}. For scenarios where PGA is extremely expensive (\eg, in wireless sensor networks), PGA can be approximated via multiple gossip steps, or may not be recommended.

To write \cref{alg:gt} in a more concise form, we introduce the network notation:
\begin{align*}
    \bsx^{(k)} &:= \mathrm{col} \{x^{(k)}_1, \ldots, x^{(k)}_n\} \in \reals^{nd}, \\
    \bsg^{(k)} &:= \mathrm{col} \{g^{(k)}_1, \ldots, g^{(k)}_n\} \in \reals^{nd}, \\
    \nabla \bsf^{(k)} &:= \mathrm{col} \{\nabla f_1(x^{(k)}), \ldots, \nabla f_n(x^{(k)})\} \in \reals^{nd}, \\
    \nabla \bsF^{(k)} &:= \mathrm{col} \{\nabla F_1(x_1^{(k)}; \xi_1^{(k)}), \ldots, \nabla F_n(x_n^{(k)}; \xi_n^{(k)})\}, \\
    \hat \bsx^{(k)} &:= \bsx^{(k)} - \ones_n \otimes \xbar^{(k)} \in \reals^{nd}, \\
    \bsW &:= W \otimes I_d, \;\; \widehat\bsW := \bsW - \frac{1}{n} \ones_n \ones_n^\tran \otimes I_d, \\
    \bsf(\bsx^{(k)}) &:= \frac{1}{n} \sum\limits_{i=1}^{n} f_i(x), \;\; \overline{\nabla f} (\bsx^{(k)}) := \frac{1}{n} \sum\limits_{i=1}^n \nabla f_i(x_i^{(k)}), \\ 
    \xbar^{(k)} &:= \frac{1}{n} \sum\limits_{i=1}^n x_i^{(k)}.
\end{align*}
With the augmented notations, the main recursion of \cref{alg:gt} can be written concisely as:
\begin{subequations} \label{eq:gt-mat}
\begin{align*}
    \bsx^{(k+1)} &= \bsW^{(k)} (\bsx^{(k)} - \alpha \bsg^{k}) \\
    \bsg^{(k+1)} &= \bsW^{(k)} \bsg^{(k)} + \nabla \bsF (\bsx^{(k+1)}; \boldsymbol\xi^{(k+1)}) - \nabla \bsF (\bsx^{(k)}; \boldsymbol\xi^{(k)}),
\end{align*}
\end{subequations}
where $\bsW^{(k)} := W^{(k)} \otimes I_d$ and
\[
    W^{(k)} = \begin{cases}
        \frac{1}{n} \ones \ones^\tran \quad &\text{if $\mod(k+1, \tau) = 0$} \\
        W &\text{otherwise.} 
    \end{cases}
\]

Now, we list all the assumptions needed for the analysis.
\begin{assumption}[Mixing matrix] \label{ass:mixing}
    The network is strongly connected, and the mixing matrix $W \in \reals^{n \times n}$ satisfies $W \ones = \ones$, $W^\tran \ones = \ones$, and $\Nullspace(I - W) = \linspan(\ones)$. In addition, denote
    \[
        \beta := \|W - \tfrac{1}{n} \ones \ones^\tran\|_2 \in (0,1).
    \]
\end{assumption}
The quantity $\beta$ indicates how well the network is connected. A smaller $\beta$ indicates a better connected network while a larger one implies a worse connectivity. 

The following two assumptions are made on the problem~\eqref{eq:prob}. In particular, convexity is not assumed, and the algorithm only has access to stochastic gradient estimates of each local function.
\begin{assumption}[$L$-smoothness] \label{ass:smooth}
    Each function $f_i \colon \reals^d \to \reals$ is continuously differentiable with an $L$-Lipschitz continuous gradient; \ie, there exists a constant $L \in \reals_{>0}$ such that
    \[
        \|\nabla f_i(x) - \nabla f_i(y)\| \leq L \|x-y\|,
    \]
    for all $(x, y) \in \intr \dom f_i \times \intr \dom f_i$ and for all $i \in [n]$. In addition, the objective function $f \colon \reals^d \to \reals$ is bounded below, and the optimal value of problem~\eqref{eq:prob} is denoted by $f^\star \in \reals$.
\end{assumption}
At iteration~$k$ of \cref{alg:gt}, a stochastic gradient estimator of each component function~$f_i$ is computed, based on the random variable $\xi_i^{(k)} \in (\Omega_i, \cF_i, \P_i)$. let~$\cF^{(0)}$ denote the $\sigma$-algebra corresponding to the initial conditions and, for all $k \in \natint_{\geq 1}$, let $\cF^{(k)}$ denote the $\sigma$-algebra defined by~$\{\bsx^{(j)}\}_{j=0}^k$. The following assumption is made on the stochastic gradient estimator.
\begin{assumption}[Stochastic noise] \label{ass:noise}
    For all $k \in \natint$ and for all $i \in [n]$, the random variables $\xi_i^{(k)}$ are independent of each other.
    The stochastic gradient estimator satisfies
    \[
        \E [\nabla F_i(x_i^{(k)}; \xi_i^{(k)}) \mid \cF^{(k)}] = \nabla f_i(x_i^{(k)}), \quad \text{for all} \ i \in [n].
    \]
    In addition, there exists $\sigma \in \reals_{>0}$ such that for all $k \in \natint$ and for all $i \in [n]$, it holds that
    \[
        \E [\|\nabla F_i(x_i^{(k)}; \xi_i^{(k)}) - \nabla f_i (x_i^{(k)})\|^2 \mid \cF^{(k)}] \leq \sigma^2.
    \]
\end{assumption}

We now state the main result of this paper on the convergence guarantees of \cref{alg:gt}.
\begin{theorem}[Convergence of GT-PGA] \label{thm:conv}
    Let \cref{ass:mixing,ass:smooth,ass:noise} hold, let $\tau \in \natint_{\geq 2}$, and let the stepsize satisfy $\alpha \leq \min \big\{\frac{1}{2L}, \frac{1}{4\sqrt{6} \beta \tau^2 L} \big\}$. Then, for any $K \in \natint_{\geq \tau+1}$, the sequence $\{\bsx^{(k)}\}$ generated by \cref{alg:gt} satisfies
    \begin{equation} \label{eq:conv-thm}
        \frac{1}{K+1} \sum\limits_{k=0}^K \Big(\E \| \overline{\nabla f} (\bsx^{(k)})\|^2 + \E \|\nabla f(\xbar^{(k)})\|^2 \Big) \leq \frac{\gamma_1 L^2}{nK} + \frac{\gamma_2 \beta \tau^2 L^2}{K} + \gamma_3 \sigma^2 \left(\frac{1}{(1-\beta^2) \tau^2} + \frac{1}{\beta \tau^2 n} \right)
    \end{equation} 
    with some constants $(\gamma_1, \gamma_2, \gamma_3) \in \reals_{>0} \times \reals_{>0} \times \reals_{>0}$.
\end{theorem}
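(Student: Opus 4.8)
The plan is to run a Lyapunov argument coupling three quantities: the objective at the network average $f(\xbar^{(k)})$, the consensus error $\E\|\hat\bsx^{(k)}\|^2$, and the gradient-tracking error $\E\|\bsg^{(k)}-\ones_n\otimes\gbar^{(k)}\|^2$, where $\gbar^{(k)}:=\frac1n\sum_{i=1}^n g_i^{(k)}$. The structural fact that separates the PGA analysis from ordinary time-varying GT is that $\widehat\bsW^{(k)}=0$ exactly when $\mod(k+1,\tau)=0$: the consensus error is \emph{reset to zero} at the end of every period of length $\tau$, while on the $\tau-1$ intermediate gossip steps it contracts only by $\beta<1$. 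I would therefore analyze the error recursions over a full period rather than step by step, which is what produces the $\beta\tau^2$ and $(1-\beta^2)^{-1}\tau^{-2}$ dependencies in~\eqref{eq:conv-thm}.

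First I would establish a descent inequality at the average iterate. Since both $W$ and $\frac1n\ones\ones^\tran$ are doubly stochastic, averaging commutes with mixing, giving the clean recursion $\xbar^{(k+1)}=\xbar^{(k)}-\alpha\gbar^{(k)}$, and the tracking construction guarantees $\gbar^{(k)}=\frac1n\sum_{i=1}^n\nabla F_i(x_i^{(k)};\xi_i^{(k)})$ for every $k$. Taking conditional expectations and invoking $L$-smoothness (\cref{ass:smooth}), the search direction $\overline{\nabla f}(\bsx^{(k)})=\E[\gbar^{(k)}\mid\cF^{(k)}]$ differs from $\nabla f(\xbar^{(k)})$ by at most $\frac{L}{\sqrt n}\|\hat\bsx^{(k)}\|$, so the standard descent lemma together with $\alpha\le\frac1{2L}$ yields
\begin{equation*}
\E f(\xbar^{(k+1)}) \le \E f(\xbar^{(k)}) - \tfrac{\alpha}{4}\big(\E\|\overline{\nabla f}(\bsx^{(k)})\|^2 + \E\|\nabla f(\xbar^{(k)})\|^2\big) + \frac{\alpha L^2}{2n}\E\|\hat\bsx^{(k)}\|^2 + \frac{\alpha^2 L\sigma^2}{2n},
\end{equation*}
where the last term uses that the average noise has variance at most $\sigma^2/n$ by the independence in \cref{ass:noise}. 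This isolates the two gradient terms on the left of the theorem and reduces the problem to controlling the time-averaged consensus error.

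Next I would close the coupled recursions for $\E\|\hat\bsx^{(k)}\|^2$ and the tracking error. From $\hat\bsx^{(k+1)}=\widehat\bsW^{(k)}(\bsx^{(k)}-\alpha\bsg^{(k)})$ and the analogous line for $\bsg$, I would sum over one period $k=t\tau,\dots,(t+1)\tau-1$ starting from $\hat\bsx^{(t\tau)}=0$: the deterministic drive $\alpha\bsg$ accumulates over at most $\tau$ gossip steps, so a crude Cauchy--Schwarz bound gives a per-period contribution of order $\alpha^2\beta^2\tau^2$ times the gradient/tracking energy, whereas the independent noise contributes $\alpha^2\tau(1-\beta^2)^{-1}\sigma^2$ (cross terms vanishing, the geometric sum of $\beta^{2j}$ giving the $(1-\beta^2)^{-1}$). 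The tracking error feeds back into the consensus bound and vice versa, so the two recursions must be resolved simultaneously; the second branch $\alpha\le\frac1{4\sqrt6\,\beta\tau^2 L}$ of the stepsize ceiling is exactly what forces the feedback coefficient $\alpha^2\beta^2\tau^2 L^2\le 1$ to be a contraction and lets the error growth be dominated by the descent gained from the gradient terms.

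The main obstacle will be this third step: choosing the Lyapunov weights $c_1,c_2$ so that $V^{(k)}:=\E f(\xbar^{(k)})+c_1\E\|\hat\bsx^{(k)}\|^2+c_2\E\|\bsg^{(k)}-\ones_n\otimes\gbar^{(k)}\|^2$ is a genuine potential in the presence of the periodic reset, i.e.\ so that the consensus- and tracking-error terms generated by one period are absorbed by the descent once $\alpha$ respects the stated ceiling. With this in place, I would telescope $V^{(k+1)}\le V^{(k)}-\frac\alpha4(\E\|\overline{\nabla f}(\bsx^{(k)})\|^2+\E\|\nabla f(\xbar^{(k)})\|^2)+(\text{per-step noise})$ from $k=0$ to $K$ and divide by $\frac\alpha4(K+1)$. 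Substituting the two branches of the stepsize bound then produces the $1/K$ transient terms (from the initial gap $f(\xbar^{(0)})-f^\star$ and the initial consensus/tracking energy divided by $\alpha K$) and the noise floor (the average-gradient variance $\sigma^2/n$ and the saturated consensus-error variance $\sigma^2/(1-\beta^2)$, each scaled by the effective stepsize, which shrinks like $(\beta\tau^2)^{-1}$ and thereby injects the advertised $\tau^{-2}$ reduction); collecting all remaining numerical and problem constants into $\gamma_1,\gamma_2,\gamma_3$ gives~\eqref{eq:conv-thm}.
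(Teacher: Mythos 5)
Your outline is sound and its first half coincides with the paper's: the descent inequality you derive is exactly \cref{lem:descent} (the paper imports it from \cite{NJYU23}), and the reduction of the theorem to controlling $\frac{1}{K+1}\sum_k\E\|\hat\bsx^{(k)}\|^2$ is the same. Where you genuinely diverge is in how that consensus error is controlled. You propose the classical GT route: a three-term Lyapunov function that carries the tracking error $\E\|\bsg^{(k)}-\ones_n\otimes\gbar^{(k)}\|^2$ as an explicit state and telescopes period by period, using the reset $\widehat\bsW^{(k)}=0$ at averaging steps (which, as you note, also resets the tracking deviation to a single centered gradient increment). The paper instead never introduces the tracking error at all: it writes the joint $(\bsx,\bsg)$ update as a block upper-triangular linear system, uses the explicit product identity for such blocks to unroll $\bsx^{(k)}$ back to $k=0$ as a weighted sum of gradient increments $\nabla\bsF(\bsx^{(j)})-\nabla\bsF(\bsx^{(j-1)})$ with coefficients $(k-j)\prod_{i}\widehat\bsW^{(i)}$, and then observes that every term with $j\le k-1-\tau$ vanishes because the product crosses an averaging step. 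This yields a closed-form window-of-$\tau$ expression for $\hat\bsx^{(k)}$ in one shot, avoids the delicate business of choosing Lyapunov weights so the coupled recursions close, and lets the noise terms be handled by independence and an explicit geometric sum. Your route should also work, but be aware of one quantitative point: you claim the feedback coefficient is of order $\alpha^2\beta^2\tau^2L^2$, whereas a careful accounting of the window (the $(k-j)$ weights contribute one factor of $\tau^2$ via Jensen and the window length plus the double summation contribute another $\tau^2$) gives $\alpha^2\beta^2\tau^4L^2$; this is precisely why the stepsize ceiling in the statement is $\frac{1}{4\sqrt6\,\beta\tau^2L}$ rather than the $\frac{1}{\beta\tau L}$ your sketch would suggest, so your Lyapunov bookkeeping must reproduce the $\tau^4$ to be consistent with the stated hypotheses and with the $\beta\tau^2L^2/K$ term in~\eqref{eq:conv-thm}.
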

In general, GT-PGA exhibits an $O(1/K)$ convergence rate, consistent with the results for gradient tracking algorithms under the stochastic, nonconvex setup (see, \eg, \cite{DS16}). As is typical in the literature, the first term on the right-hand side of~\eqref{eq:conv-thm} is related to the number of agents and independent of the topology as well as the PGA period $\tau$. The crucial trade-off between the connectivity of the communication network ($\beta$) and the PGA period ($\tau$) is depicted in the second term in~\eqref{eq:conv-thm}.
\begin{itemize}
    \item When the network is large or sparse (\ie, $\beta \to 1$), global averaging is more critical to drive consensus and a smaller $\tau$ is needed to compensate the negative effect of poor connectivity.

    \item When the network is small or dense, gossip communication is already helpful enough to achieve consensus and a larger $\tau$ can be used. In the extreme case, GT-PGA with $\tau \to \infty$ reduces to vanilla GT.

    \item Recall that when $W=I$, GT-PGA reduces to GT with fully-connected graphs and with local updates (LU-GT). Thus, gossip communication in GT-PGA also contributes to consensus, and this property is critical to establish the superiority of GT-PGA with LU-GT.
\end{itemize}
Therefore, thanks to the periodic global averaging operation, GT-PGA enjoys promising convergence properties compared with vanilla GT and LU-GT, and our analysis supports the above discussion.

We end this section with an auxiliary convergence result with further tuning on the stepsize $\alpha$. The same stepsize tuning strategy is common in the literature on decentralized optimization; see, \eg, \cite{Stich18,KKM+20,KLB+20,Alghunaim24}.
\begin{corollary} \label{cor:conv}
    Let \cref{ass:mixing,ass:smooth,ass:noise} hold, let $\tau \in \natint_{\geq 2}$, and let the stepsize satisfy 
    \begin{equation} \label{eq:cor-alpha}
        \alpha = \min \left\{\left(\frac{nL}{K \sigma^2} \right)^{\frac{1}{2}}, \left(\frac{1-\beta^2}{\beta^2 \tau^2 K \sigma^2} \right)^{\frac{1}{2}}, \frac{1}{2L}, \frac{1}{4\sqrt{6} \beta \tau^2 L}\right\}.
    \end{equation} 
    In addition, suppose $n \gg 1/(\beta \tau)^2$ (\eg, the number of agents~$n$ is sufficiently large or the network is sufficiently sparse). Then, for any $K \in \natint_{\geq \tau+1}$, the sequence $\{\bsx^{(k)}\}$ generated by \cref{alg:gt} satisfies
    \[
        \frac{1}{K+1} \sum\limits_{k=0}^K \Big(\E \| \overline{\nabla f} (\bsx^{(k)})\|^2 + \E \|\nabla f(\xbar^{(k)})\|^2 \Big) \leq \frac{\gamma_4 \beta \tau^2 L^3}{K} + \frac{\gamma_5 L^{\frac{3}{2}} \sigma}{(nK)^{\frac{1}{2}}} + \frac{\gamma_6 \beta \tau L^2 \sigma}{(1-\beta^2)^{\frac{1}{2}} K^{\frac{1}{2}}}
    \]
    with some constants $(\gamma_4, \gamma_5, \gamma_6) \in \reals_{>0} \times \reals_{>0} \times \reals_{>0}$.
\end{corollary}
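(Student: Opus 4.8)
The plan is to deduce \cref{cor:conv} not from the $\alpha$-free estimate \eqref{eq:conv-thm} but from the $\alpha$-dependent inequality produced \emph{en route} to \cref{thm:conv}, before the stepsize is fixed. Concretely, I would isolate the intermediate bound, which I expect to take the form
\[
\frac{1}{K+1}\sum_{k=0}^K\Big(\E\|\overline{\nabla f}(\bsx^{(k)})\|^2+\E\|\nabla f(\xbar^{(k)})\|^2\Big)\;\le\;\frac{c_1}{\alpha K}+c_2\,\alpha+c_3\,\alpha,
\]
valid for every $0<\alpha\le\alpha_{\max}:=\min\{\tfrac{1}{2L},\tfrac{1}{4\sqrt6\beta\tau^2 L}\}$, where $c_1$ collects the initial optimality gap together with the initial consensus/tracking error, $c_2=\Theta(L\sigma^2/n)$ is the usual averaged-SGD variance coefficient, and $c_3=\Theta\big(\beta^2\tau^2 L^2\sigma^2/(1-\beta^2)\big)$ is the coefficient of the noise-driven consensus error. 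Both the three clean terms on the right of the corollary and the fact that the prescribed stepsize \eqref{eq:cor-alpha} is the minimum of two square-root expressions and two deterministic caps strongly indicate this is the correct starting point, and matching the powers of $L$ in the corollary forces $c_1$ to be of order $L^2$ times the initial error.

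Given such a bound, the corollary is a routine application of the standard stepsize-balancing lemma. Setting $\alpha$ to the minimum prescribed in \eqref{eq:cor-alpha}, I would use that $1/\alpha$ is the largest of the four reciprocals and hence bounded by their sum, while $\alpha\le(c_1/(c_jK))^{1/2}$ gives $c_j\alpha\le\sqrt{c_1c_j/K}$ for $j=2,3$. Combining these,
\[
\frac{1}{K+1}\sum_{k=0}^K\big(\cdots\big)\;\le\;\frac{c_1}{\alpha_{\max}K}+2\sqrt{\tfrac{c_1c_2}{K}}+2\sqrt{\tfrac{c_1c_3}{K}},
\]
and substituting the binding cap $\alpha_{\max}=\tfrac{1}{4\sqrt6\beta\tau^2 L}$ (which is the active one once $\beta\tau^2\gtrsim1$, consistent with $\tau\ge2$) together with the explicit $c_1,c_2,c_3$ produces exactly $\gamma_4\beta\tau^2L^3/K$, $\gamma_5L^{3/2}\sigma/(nK)^{1/2}$, and $\gamma_6\beta\tau L^2\sigma/((1-\beta^2)^{1/2}K^{1/2})$.

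The one place requiring care is the hypothesis $n\gg1/(\beta\tau)^2$: I would invoke it to compare the two stochastic coefficients (equivalently, the pieces $\tfrac{1}{(1-\beta^2)\tau^2}$ and $\tfrac{1}{\beta\tau^2 n}$ of \eqref{eq:conv-thm}), verifying that under this condition $c_3\gtrsim c_2$, so that the sub-dominant averaged-SGD contribution is absorbed and the bound collapses to the stated three terms; this also identifies which square-root stepsize is binding. I expect the genuine difficulty to lie entirely upstream, in establishing the $\alpha$-dependent base bound as part of \cref{thm:conv} through the coupled recursions for the optimality gap, the consensus error $\|\hat\bsx^{(k)}\|$, and the gradient-tracking error across both gossip and global-averaging steps. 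Once that estimate is in hand, the corollary itself is bookkeeping, the only subtleties being the clean reciprocal-sum argument in the tuning lemma and invoking $n\gg1/(\beta\tau)^2$ at precisely the right step.
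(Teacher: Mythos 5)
Your overall route is the paper's route: the proof of \cref{cor:conv} indeed starts from the $\alpha$-dependent inequality \eqref{eq:thm-prf} obtained inside the proof of \cref{thm:conv}, reduces it to $\tfrac{c_1}{\alpha K}+c_2\alpha+c_3\alpha^2$ with exactly your $c_1=L^2$, $c_2=L\sigma^2/n$, $c_3=\beta^2\tau^2L^2\sigma^2/(1-\beta^2)$, and then runs the standard three-case stepsize-balancing argument with the min-of-four stepsize \eqref{eq:cor-alpha}. Two discrepancies are worth flagging. First, the consensus-noise term is quadratic in $\alpha$ (coming from $\tfrac{32\alpha^2\beta^2\tau^2L^2}{1-\beta^2}\sigma^2$), not linear as you wrote; this is harmless because with $\alpha=(c_1/(c_3K))^{1/2}$ one gets $c_3\alpha^2=c_1/K$, which is dominated, and the surviving $\tfrac{c_1}{\alpha K}=(c_1c_3/K)^{1/2}$ gives the third term of the corollary either way. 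Second, and more substantively, you misplace the role of the hypothesis $n\gg 1/(\beta\tau)^2$: it is \emph{not} used to compare $c_2$ with $c_3$ or to decide which square-root cap binds (both stochastic terms survive in the final bound, so nothing is ``absorbed'' between them). The intermediate bound actually carries a fourth term, $\tfrac{\alpha^4\beta^2\tau^4L^4\sigma^2}{n}$, which your guessed starting inequality omits; the paper first uses the cap $\alpha\le\tfrac{1}{4\sqrt6\beta\tau^2L}$ to reduce it to $\tfrac{\alpha^2L^2\sigma^2}{n}$ and then invokes $\tfrac1n\ll\beta^2\tau^2$ precisely to fold that remainder into the $c_3\alpha^2$ term. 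Your plan would surface this term once you derived the bound honestly, but as written the ``one place requiring care'' is aimed at the wrong step.
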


\section{Algorithm analysis} \label{sec:analysis}

This section presents the theoretical analysis of \cref{alg:gt} stated in \cref{thm:conv}. As typical in the analyses of decentralized algorithms, the two important pillars are the \textit{descent inequality} and the \textit{consensus inequality}. The descent inequality establishes the convergence properties of the averaged iterates $\xbar^{(k)}$ to a first-order stationary point and is standard in the analyses of GT (see upcoming \cref{lem:descent}). The consensus inequality is different from existing analyses and characterizes the per-iteration behavior of the consensus error; see upcoming \cref{lem:consensus}.
\begin{lemma}[Descent inequality \textnormal{\cite[Lemma~5.1]{NJYU23}}] \label{lem:descent}
    Let \cref{ass:mixing,ass:smooth,ass:noise} hold, and let the stepsize satisfy $\alpha \in \big(0,\tfrac{1}{2L} \big]$. Denote $\ftilde := f - f^\star$. Then, the sequence generated by \cref{alg:gt} satisfies
    \[
        \frac{1}{K+1} \sum\limits_{k=0}^K \big( \E \|\overline{\nabla f} (\bsx^{(k)})\|^2 + \E \|\nabla f(\xbar^{(k)})\|^2 \big) \leq \frac{4}{\alpha (K+1)} \E \ftilde (\xbar^{(0)}) + \frac{2L^2}{n(K+1)} \sum_{k=0}^K \E \|\hat \bsx^{(k)}\|^2 + \frac{2\alpha L \sigma^2}{n},
    \]
    for all $k \in \natint$.
\end{lemma}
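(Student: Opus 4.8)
The plan is to reduce everything to the dynamics of the network-averaged iterate $\xbar^{(k)}$ by exploiting the standard gradient-tracking invariant, and then to apply the descent lemma for $L$-smooth functions to $f$ at $\xbar^{(k)}$. First I would establish the invariant
\[
    \bar g^{(k)} := \tfrac1n \textstyle\sum_{i=1}^n g_i^{(k)} = \tfrac1n \sum_{i=1}^n \nabla F_i(x_i^{(k)};\xi_i^{(k)})
\]
by induction on $k$. Both admissible choices of $W^{(k)}$ (namely $W$ and $\tfrac1n\ones_n\ones_n^\tran$) are doubly stochastic, so left-multiplying the $\bsg$-recursion by $\tfrac1n(\ones_n^\tran\otimes I_d)$ annihilates the mixing and leaves $\bar g^{(k+1)} = \bar g^{(k)} + \tfrac1n\sum_i(\nabla F_i(x_i^{(k+1)};\xi_i^{(k+1)}) - \nabla F_i(x_i^{(k)};\xi_i^{(k)}))$; the base case $g_i^{(0)} = \nabla F_i(x_i^{(0)};\xi_i^{(0)})$ then closes the induction. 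The same double stochasticity applied to the $\bsx$-recursion yields $\xbar^{(k+1)} = \xbar^{(k)} - \alpha\,\bar g^{(k)}$, so the averaged iterate follows a single stochastic-gradient step.

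Next I would apply the descent lemma to $f$ (which is $L$-smooth as an average of $L$-smooth $f_i$) at $\xbar^{(k)}$, giving
\[
    f(\xbar^{(k+1)}) \le f(\xbar^{(k)}) - \alpha\inprod{\nabla f(\xbar^{(k)})}{\bar g^{(k)}} + \tfrac{\alpha^2 L}{2}\|\bar g^{(k)}\|^2.
\]
Taking $\E[\,\cdot \mid \cF^{(k)}]$ and using \cref{ass:noise} — unbiasedness for the cross term, and independence across agents to bound the averaged-noise variance by $\sigma^2/n$ for the quadratic term — turns the cross term into $-\alpha\inprod{\nabla f(\xbar^{(k)})}{\overline{\nabla f}(\bsx^{(k)})}$ and the quadratic term into $\tfrac{\alpha^2 L}{2}\|\overline{\nabla f}(\bsx^{(k)})\|^2 + \tfrac{\alpha^2 L\sigma^2}{2n}$. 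I would then split the cross term via the polarization identity $-\inprod{a}{b} = -\tfrac12\|a\|^2 - \tfrac12\|b\|^2 + \tfrac12\|a-b\|^2$ with $a=\nabla f(\xbar^{(k)})$, $b=\overline{\nabla f}(\bsx^{(k)})$, and bound the residual by $L$-smoothness together with Jensen's inequality, $\|\nabla f(\xbar^{(k)}) - \overline{\nabla f}(\bsx^{(k)})\|^2 \le \tfrac{L^2}{n}\|\hat\bsx^{(k)}\|^2$. This is precisely where the consensus error enters and gets isolated.

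Finally, the stepsize condition $\alpha\le\tfrac{1}{2L}$ gives $1-\alpha L\ge\tfrac12$, which lets me absorb $\tfrac{\alpha^2L}{2}\|\overline{\nabla f}(\bsx^{(k)})\|^2$ into $-\tfrac\alpha2\|\overline{\nabla f}(\bsx^{(k)})\|^2$ while retaining a coefficient of at least $\tfrac\alpha4$ on each squared-gradient term. Rearranging, taking total expectations, telescoping over $k=0,\dots,K$, and using $f(\xbar^{(K+1)})\ge f^\star$ to replace the endpoint by $\E\ftilde(\xbar^{(0)})$ produces the stated bound after multiplying through by $\tfrac{4}{\alpha(K+1)}$; the coefficients $\tfrac{2L^2}{n(K+1)}$ and $\tfrac{2\alpha L\sigma^2}{n}$ fall out exactly from this bookkeeping. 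The only genuinely delicate point is the invariant in the first step: it is the double stochasticity of \emph{both} mixing matrices — including the PGA step $\tfrac1n\ones_n\ones_n^\tran$ — that keeps the averaged dynamics topology-free, so that all network and period dependence is confined to $\|\hat\bsx^{(k)}\|^2$ and deferred to the consensus inequality (\cref{lem:consensus}). I expect this to be the conceptual crux; everything downstream is routine.
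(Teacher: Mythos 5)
Your proposal is correct and follows exactly the standard derivation that the paper itself omits (it defers to \cite[Lemma~5.1]{NJYU23}): the doubly-stochastic invariant for $\bar g^{(k)}$, the descent lemma on $\xbar^{(k+1)}=\xbar^{(k)}-\alpha\bar g^{(k)}$, the polarization split with the $\tfrac{L^2}{n}\|\hat\bsx^{(k)}\|^2$ consensus residual, and the telescoping all check out, with the coefficients $\tfrac{4}{\alpha(K+1)}$, $\tfrac{2L^2}{n(K+1)}$, and $\tfrac{2\alpha L\sigma^2}{n}$ emerging exactly as you describe under $\alpha\le\tfrac{1}{2L}$.
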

Note that this inequality does not involve the mixing matrix $W$, so it holds for gradient tracking with static topology as well as the proposed GT-PGA. Its derivation is standard in the literature and thus omitted here.

The second lemma studies the behavior of the consensus error and is used to establish that all agents' local variables converge to their average.
\begin{lemma}[Consensus inequality] \label{lem:consensus}
    Let \cref{ass:mixing,ass:smooth,ass:noise} hold, let $\tau \in \natint_{\geq 2}$, and let the stepsizes satisfy $\alpha \in \big(0, \tfrac{1}{4\sqrt{6} \beta \tau^2 L} \big]$. Then, for $K \in \natint_{\geq \tau+1}$, the iterates generated by \cref{alg:gt} satisfy
    \begin{align}
        \frac{1}{K+1} \sum_{k=0}^K \E \|\hat \bsx^{(k)}\|^2 &\leq \frac{2}{K+1} \sum_{k=0}^\tau \E [\|\hat{\bsx}^{(k)}\|^2] + \frac{n}{192 \beta^2 \tau^4 L^2 (K+1)} \sum_{k=0}^K \E [\|\nabla f(\xbar^{(k)})\|^2] \nonumber \\ 
        &\phantom{\leq} \mathrel{+} \left(\frac{1}{768 \beta^2 \tau^4 L^2} + \frac{n}{24\tau^4 L^2} + \frac{n}{6 (1-\beta^2) \tau^2 L^2}\right) \sigma^2. \label{eq:lem-consensus}
    \end{align} 
\end{lemma}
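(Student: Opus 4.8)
The plan is to control the consensus error through the coupled pair $\hat\bsx^{(k)}$ and the tracking disagreement $\hat\bsg^{(k)} := \bsg^{(k)} - \frac1n(\ones_n\ones_n^\tran\otimes I_d)\bsg^{(k)}$. First I would subtract the network average from both updates in~\eqref{eq:gt-mat}. Since each $W^{(k)}$ (the gossip matrix $W$ as well as $\frac1n\ones\ones^\tran$) is doubly stochastic, the averaged iterate satisfies $\xbar^{(k+1)} = \xbar^{(k)} - \alpha\,\overline{\nabla F}^{(k)}$ and the gradient-tracking identity $\tfrac1n\sum_i g_i^{(k)} = \overline{\nabla F}^{(k)}$ holds for all $k$ by induction from the initialization $g_i^{(0)} = \nabla F_i(x_i^{(0)};\xi_i^{(0)})$. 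Because $\widehat\bsW^{(k)}$ annihilates the consensus direction, this produces the clean recursions
\[
\hat\bsx^{(k+1)} = \widehat\bsW^{(k)}\hat\bsx^{(k)} - \alpha\widehat\bsW^{(k)}\hat\bsg^{(k)}, \qquad \hat\bsg^{(k+1)} = \widehat\bsW^{(k)}\hat\bsg^{(k)} + \big(I - \tfrac1n\ones_n\ones_n^\tran\otimes I_d\big)\big(\nabla\bsF^{(k+1)} - \nabla\bsF^{(k)}\big),
\]
where $\|\widehat\bsW^{(k)}\|_2 = \beta$ at gossip steps and $\widehat\bsW^{(k)} = 0$ exactly at the averaging steps $\mod(k+1,\tau)=0$. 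The decisive structural fact is that $\hat\bsx^{(k)} = 0$ at every averaging step, so the trajectory splits into periods of length $\tau$, each beginning with zero $\bsx$-disagreement.

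Within one period I would unroll the first recursion from its zero initial disagreement to write $\hat\bsx$ as a $\beta$-weighted moving sum of the $\hat\bsg$ in that period; squaring, using $\|\widehat\bsW\|_2 = \beta$ and the geometric weights, and summing over the period and then over periods bounds the left-hand side of~\eqref{eq:lem-consensus} by a weighted sum of $\E\|\hat\bsg^{(k)}\|^2$ with a prefactor of order $\alpha^2/(1-\beta^2)$. It is here that the $(1-\beta^2)$ factor and the powers of $\tau$ in the final bound enter. Next I would bound the $\hat\bsg$ increments by splitting $\nabla\bsF^{(k+1)} - \nabla\bsF^{(k)}$ into a deterministic part $\nabla\bsf^{(k+1)} - \nabla\bsf^{(k)}$, controlled via $L$-smoothness (\cref{ass:smooth}) by $L\|\bsx^{(k+1)} - \bsx^{(k)}\|$, and the noise difference $\bss^{(k+1)} - \bss^{(k)}$. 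The increment $\bsx^{(k+1)} - \bsx^{(k)}$ in turn decomposes into a disagreement part (feeding $\hat\bsx$ back into itself) and the averaged-descent part $-\alpha\ones_n\otimes\overline{\nabla F}^{(k)}$, whose squared norm is $\alpha^2 n\|\overline{\nabla F}^{(k)}\|^2$ and which, after taking conditional expectations, is governed by $\E\|\overline{\nabla f}(\bsx^{(k)})\|^2$---hence by $\E\|\nabla f(\xbar^{(k)})\|^2$ up to a $\tfrac{L^2}{n}\|\hat\bsx^{(k)}\|^2$ smoothness correction---plus a $\sigma^2/n$ variance term.

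Taking expectations throughout, the independence and bounded-variance hypotheses of \cref{ass:noise} make the martingale cross-terms vanish and give $\E\|\bss^{(k)}\|^2 \le n\sigma^2$, so the noise contributions accumulate geometrically over each period into the $\sigma^2$ terms of~\eqref{eq:lem-consensus}, the $1/(1-\beta^2)$ factor again arising from the geometric summation. Substituting the $\hat\bsg$ estimate back into the period-unrolled $\hat\bsx$ bound and collecting terms, the self-referential $\hat\bsx$ contribution carries a coefficient of order $\alpha^2\beta^2\tau^4 L^2$; the stepsize restriction $\alpha \le \frac{1}{4\sqrt6\,\beta\tau^2 L}$ forces this below $\tfrac1{96} < \tfrac12$, so it can be absorbed into the left-hand side, leaving the $\nabla f(\xbar^{(k)})$ term with the small coefficient $\frac{n}{192\beta^2\tau^4 L^2}$ together with the stated noise term. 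The first period, whose disagreement depends on the (non-reset) initialization, is carried separately as the transient term $\frac{2}{K+1}\sum_{k=0}^\tau\E\|\hat\bsx^{(k)}\|^2$.

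I expect the main obstacle to be exactly this closing step. Because $\hat\bsg$ is not reset at the averaging steps, its bound couples back into $\hat\bsx$ across the entire period, producing a two-layer accumulation (one from unrolling $\hat\bsx$, one from unrolling $\hat\bsg$) whose geometric and period-length factors must be tracked with explicit constants. The delicate point is to keep the $\hat\bsx$ self-feedback coefficient strictly below one under the prescribed stepsize while simultaneously producing the exact $\beta^2\tau^4$ and $(1-\beta^2)\tau^2$ scalings appearing in~\eqref{eq:lem-consensus}; managing the cross-period dependence of $\hat\bsg$ and the correlation between consecutive noise differences is where most of the care is required.
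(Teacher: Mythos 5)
Your plan follows essentially the same route as the paper: the paper unrolls the block upper-triangular recursion in $(\bsx^{(k)},\bsg^{(k)})$ into a closed form whose coefficients $(k-j)\prod_{i}\widehat\bsW^{(i)}$ are exactly the collapsed version of your two-layer $(\hat\bsx,\hat\bsg)$ unrolling, and it likewise exploits that $\widehat\bsW^{(i)}=0$ at averaging steps to truncate the memory to a single period, splits the gradient increments into a smoothness-controlled part and a noise part, and absorbs the $\hat\bsx$ self-feedback via the stepsize condition. The one device your sketch leaves implicit is how the correlated differences $\bss^{(j)}-\bss^{(j-1)}$ become orthogonal: the paper performs a summation by parts so that each $\bss^{(j)}$ appears once with combined coefficient $(k-j)\prod_{i=k-1}^{j}\widehat\bsW^{(i)}-(k-j-1)\prod_{i=k-1}^{j+1}\widehat\bsW^{(i)}$, after which conditional independence kills the cross terms; a crude Jensen bound on the differences instead would lose a factor of $\tau$ and miss the stated constants.
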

\begin{proof}
    The analysis relies on the following reformulation of~\eqref{eq:gt-mat}:
    \[
        \begin{bmatrix} \bsx^{(k+1)} \\ \bsg^{(k+1)} \end{bmatrix} = 
        \begin{bmatrix} \bsW^{(k)} & -\alpha \bsW^{(k)} \\ 0 & \bsW^{(k)} \end{bmatrix} 
        \begin{bmatrix} \bsx^{(k)} \\ \bsg^{(k)} \end{bmatrix} + 
        \begin{bmatrix} 0 \\ \nabla \bsF(\bsx^{(k+1)}; \boldsymbol \xi^{(k+1)}) - \nabla \bsF(\bsx^{(k)}; \boldsymbol \xi^{(k)}) \end{bmatrix}.
    \]
    The same reformulation has been used in the literature \cite{SLJ+22,NJYU23}. Note that the coefficient matrix on the right-hand side is block upper-triangular, and thus
    \[
        \begin{bmatrix} \bsW^{(i)} & -\alpha \bsW^{(i)} \\ 0 & \bsW^{(i)} \end{bmatrix} \cdots
        \begin{bmatrix} \bsW^{(j)} & -\alpha \bsW^{(j)} \\ 0 & \bsW^{(j)} \end{bmatrix}
        = \begin{bmatrix} \prod_{k=i}^j \bsW^{(k)} & -\alpha (i-j+1) \prod_{k=i}^j \bsW^{(k)} \\ 0 & \prod_{k=i}^j \bsW^{(k)} \end{bmatrix}.
    \]
    It follows from the reformulation and the above identity that when $k-1 \geq \tau$, it holds that
    \begin{equation}
        \bsx^{(k)} = \Big(\prod_{i=k-1}^{0} \bsW^{(i)}\Big) \bsx^{(0)} - \alpha \sum_{j = 0}^{k-1} (k-j) \Big(\prod_{i=k-1}^{j} \bsW^{(i)} \Big) \big(\nabla \bsF(\bsx^{(j)}, \boldsymbol\xi^{(j)}) - \nabla \bsF (\bsx^{(j-1)}, \boldsymbol\xi^{(j-1)}) \big), \label{eq:lem-consensus-x}
    \end{equation}
    where we set $\nabla \bsF (\bsx^{(-1)}, \boldsymbol\xi^{(-1)}) \equiv \nabla \bsf (\bsx^{(-1)}) := 0$. It is assumed that $k-1 \geq \tau$, and thus at least one matrix $W^{(k)}$ (resp., $\bsW^{(k)}$) is the scaled all-ones matrix $\tfrac{1}{n} \ones_n \ones_n^\tran$ (resp., $\tfrac{1}{n} \ones_n \ones_n^\tran \otimes I_d$). Then, \eqref{eq:lem-consensus-x} can be rewritten as
    \begin{align*}
        \bsx^{(k)} &= \big(\prod\limits_{i=k-1}^{0} \bsW^{(i)}\Big) \bsx^{(0)} - \alpha \sum\limits_{j = 0}^{k-1-\tau} (k-j) \Big(\prod\limits_{i=k-1}^{j} \bsW^{(i)} \Big) \big(\nabla \bsF(\bsx^{(j)}, \boldsymbol\xi^{(j)}) - \nabla \bsF(\bsx^{(j-1)}, \boldsymbol\xi^{(j-1)}) \big) \\
        &\phantom{=} \mathrel{-} \alpha \sum\limits_{j = k - \tau}^{k-1} (k-j) \Big(\prod\limits_{i=k-1}^{j} \bsW^{(i)} \Big) \big(\nabla \bsF(\bsx^{(j)}, \boldsymbol\xi^{(j)}) - \nabla \bsF(\bsx^{(j-1)}, \boldsymbol\xi^{(j-1)}) \big),
    \end{align*}
    where we split the summation for later convenience. Then, we multiply both sides of the equation by $I_{nd} - \tfrac{1}{n} \ones_n \ones_n^\tran \otimes I_d$, denote $\bss^{(k)} := \nabla \bsF(\bsx^{(k)}; \boldsymbol \xi^{(k)}) - \nabla \bsf(\bsx^{(k)}) \in \reals^{nd}$ (with $\bss^{(0)} := 0$), and obtain
    \begin{align*}
        \hat\bsx^{(k)} &= \Big(\prod_{i=k-1}^{0} \widehat\bsW^{(i)}\Big) \hat{\bsx}^{(0)} - \alpha \sum_{j = 0}^{k-1-\tau} (k-j) \Big(\prod_{i=k-1}^{j} \widehat\bsW^{(i)} \Big) \big(\nabla \bsF (\bsx^{(j)}, \boldsymbol\xi^{(j)}) - \nabla \bsF (\bsx^{(j-1)}, \boldsymbol\xi^{(j-1)}) \big) \\
        &\phantom{\mathrel{=}} \; - \alpha \sum_{j = k - \tau}^{k-1} (k-j) \Big(\prod_{i=k-1}^{j} \widehat\bsW^{(i)} \Big) \big(\nabla \bsF (\bsx^{(j)}, \boldsymbol\xi^{(j)}) - \nabla \bsF (\bsx^{(j-1)}, \boldsymbol\xi^{(j-1)}) \big) \nonumber \\ 
        &= -\alpha \sum_{j = k - \tau}^{k-1} (k-j) \Big(\prod_{i=k-1}^{j} \widehat\bsW^{(i)} \Big) \big(\nabla \bsF (\bsx^{(j)}, \boldsymbol\xi^{(j)}) - \nabla \bsF (\bsx^{(j-1)}, \boldsymbol\xi^{(j-1)}) \big) \\ 
        &= - \alpha \sum_{j = k - \tau}^{k-1} (k-j) \Big(\prod_{i=k-1}^{j} \widehat{\bsW}^{(i)} \Big) \big(\nabla \bsf(\bsx^{(j)}) - \nabla \bsf (\bsx^{(j-1)}) \big) \\ 
        &\phantom{=} \; - \alpha \sum_{j = k - \tau}^{k-1} (k-j) \Big(\prod_{i=k-1}^{j} \widehat{\bsW}^{(i)} \Big) \big(\bss^{(j)} - \bss^{(j-1)} \big) \\ 
        &= - \alpha \sum_{j = k - \tau}^{k-1} (k-j) \Big(\prod_{i=k-1}^{j} \widehat{\bsW}^{(i)} \Big) \big(\nabla \bsf(\bsx^{(j)}) - \nabla \bsf (\bsx^{(j-1)}) \big) - \alpha \widehat{\bsW}^{(k-1)} \bss^{(k-1)} \\
        &\phantom{=} \; - \alpha \sum_{j=k-\tau}^{k-2} \Big( (k-j) \Big( \prod_{i=k-1}^{j} \widehat{\bsW}^{(i)} \Big) - (k-j-1) \Big( \prod_{i=k-1}^{j+1} \widehat{\bsW}^{(i)} \Big) \Big) \bss^{(j)}.
    \end{align*}
    where the second equality uses the fact that $W^{(k)} = \tfrac{1}{n} \ones \ones^\tran$ and $\widehat\bsW^{(k)} = 0$ if $\mod(k+1,\tau) = 0$. Then, the expectation of $\|\hat\bsx^{(k)}\|^2$ conditioned on $\cF^{(k)}$ is bounded by 
    \begin{align}
        \MoveEqLeft[0.2] \E_k \big[\big\lVert \hat{\bsx}^{(k)} \big\rVert^2 \big] \nonumber \\ 
        &\leq 2 \E_k \Big[\Big\lVert \alpha \sum_{j = k - \tau}^{k-1} (k-j) \Big(\prod_{i=k-1}^{j} \widehat{\bsW}^{(i)} \Big) \big(\nabla \bsf(\bsx^{(j)}) - \nabla \bsf (\bsx^{(j-1)}) \big) \Big\rVert^2\Big] \nonumber \\ 
        &\phantom{\leq} \; \mathrel{+} 2 \E_k \Big[\Big\lVert \alpha \widehat{\bsW}^{(k-1)} \bss^{(k-1)} + \alpha \sum_{j=k - \tau}^{k-2} \Big((k-j) \Big( \prod_{i=k-1}^{j} \widehat{\bsW}^{(i)} \Big) + (k-j-1) \Big( \prod_{i=k-1}^{j+1} \widehat{\bsW}^{(i)} \Big) \Big) \bss^{(j)} \Big\rVert^2 \Big], \label{eq:lem-consensus-xhat-norm}
    \end{align}
    where we denote $\E_k := \E[\cdot \vert \cF^{(k)}]$ and apply Jensen's inequality. We then bound the two terms on the right-hand side of~\eqref{eq:lem-consensus-xhat-norm} one by one. The first term can be bounded by
    \begin{align*}
        \MoveEqLeft[0.2] \E_k \Big[\Big\lVert \alpha \sum_{j = k - \tau}^{k-1} (k-j) \Big(\prod_{i=k-1}^{j} \widehat{\bsW}^{(i)} \Big) \big(\nabla \bsf(\bsx^{(j)}) - \nabla \bsf (\bsx^{(j-1)}) \big) \Big\rVert^2\Big] \nonumber \\ 
        &\leq \alpha^2 \tau^3 \sum_{j = k - \tau}^{k-1} \E_k \Big[\Big\lVert \Big(\prod_{i=k-1}^{j} \widehat{\bsW}^{(i)} \Big) \big(\nabla \bsf(\bsx^{(j)}) - \nabla \bsf (\bsx^{(j-1)}) \big) \Big\rVert^2\Big] \\ 
        &\leq \alpha^2 \tau^3 \sum_{j = k - \tau}^{k-1} \beta^{2(k-j)} \Big(6\alpha^2 n L^2 \E_k [\|\nabla f(\xbar^{(j-1)})\|^2] + 9L^2 \E_k [\|\hat \bsx^{(j-1)}\|^2] + 3L^2 \E_k [\|\hat \bsx^{(j)}\|^2] + 3\alpha^2 L^2 \sigma^2 \Big) \\
        &\leq \alpha^2 \beta^2 \tau^3 \sum_{j = k - \tau}^{k-1} \Big(6\alpha^2 n L^2 \E_k [\|\nabla f(\xbar^{(j-1)})\|^2] + 9L^2 \E_k [\|\hat \bsx^{(j-1)}\|^2] + 3L^2 \E_k [\|\hat \bsx^{(j)}\|^2] + 3\alpha^2 L^2 \sigma^2 \Big).
    \end{align*}
    The first inequality follows from Jensen's inequality. In the second inequality, we use the \cref{ass:mixing}, the property of matrix norms, and \cite[Lemma~11]{SLJ+22}. Then in the last inequality, we use the fact that $\beta \in (0,1)$ and take the maximum of $\beta^{2(k-j)}$ over $j=k-\tau, \ldots, k-1$.

    Similarly, the second term on the right-hand side of~\eqref{eq:lem-consensus-xhat-norm} is bounded by
    \begin{subequations}
    \begin{align}
        \MoveEqLeft[0.2] \E_k \Big[\Big\lVert \alpha \widehat{\bsW}^{(k-1)} \bss^{(k-1)} + \alpha \sum_{j=k - \tau}^{k-2} \Big((k-j) \Big( \prod_{i=k-1}^{j} \widehat{\bsW}^{(i)} \Big) + (k-j-1) \Big( \prod_{i=k-1}^{j+1} \widehat{\bsW}^{(i)} \Big) \Big) \bss^{(j)} \Big\rVert^2 \Big] \nonumber \\ 
        &= \alpha^2 \E_k [\|\widehat\bsW^{(k-1)} \bss^{(k-1)}\|^2] \nonumber \\ 
        &\phantom{=} \mathrel{+} \alpha^2 \sum_{k-\tau}^{k-2} \E_k \Big[\Big\lVert \Big( (k-j) \Big(\prod_{i=k-1}^j \widehat\bsW^{(i)} \Big) - (k-j-1) \Big( \prod_{i=k-1}^{j+1} \widehat\bsW^{(i)} \Big) \Big) \bss^{(j)} \Big\rVert^2 \Big] \label{eq:lem-consensus-prf-1a} \\ 
        &\leq \alpha^2 \beta^2 n \sigma^2 + 2\alpha^2 \sum_{j=k-\tau}^{k-2} \E_k \Big[ \Big( (k-j)^2 \Big\lVert \prod_{i=k-1}^j \widehat\bsW^{(j)} \Big\rVert_2^2 + (k-j-1)^2 \Big\lVert \prod_{i=k-1}^{j+1} \widehat\bsW^{(j)} \Big\rVert_2^2 \Big) \|\bss^{(j)}\|^2 \Big] \label{eq:lem-consensus-prf-1b} \\ 
        &\leq \alpha^2 \beta^2 n \sigma^2 + 4\alpha^2 \tau^2 \sum_{j=k-\tau}^{k-2} \beta^{2(k-j-1)} n \sigma^2 \label{eq:lem-consensus-prf-1c} \\ 
        &\leq \alpha^2 \beta^2 n \sigma^2 + 4\alpha^2 \tau^2 n \sigma^2 \frac{\beta^2 - \beta^{2\tau}}{1-\beta^2} \label{eq:lem-consensus-prf-1d} \\ 
        &\leq \alpha^2 \beta^2 n \sigma^2 \Big(1 + \frac{4\tau^2}{1-\beta^2} \Big). \label{eq:lem-consensus-prf-1e}
    \end{align}
    \end{subequations}
    In~\eqref{eq:lem-consensus-prf-1a} we use the independence of the gradient noise. Then,~\eqref{eq:lem-consensus-prf-1b} and~\eqref{eq:lem-consensus-prf-1c} use \cref{ass:mixing,ass:noise}. In~\eqref{eq:lem-consensus-prf-1d} we calculate the sum of the geometric series, and~\eqref{eq:lem-consensus-prf-1e} uses $\beta \in (0,1)$.

    Substituting the two bounds back to~\eqref{eq:lem-consensus-xhat-norm} gives
    \begin{align}
        \E_k \big[\big\lVert \hat{\bsx}^{(k)} \big\rVert^2 \big] &\leq 2\alpha^2 \beta^2 \tau^3 \sum_{j = k - \tau}^{k-1} \Big(6\alpha^2 n L^2 \E_k [\|\nabla f(\xbar^{(j-1)})\|^2] + 9L^2 \E_k [\|\hat \bsx^{(j-1)}\|^2] + 3L^2 \E_k [\|\hat \bsx^{(j)}\|^2] \Big) \nonumber \\
        &\phantom{=} \mathrel{+} \Big(6\alpha^4 \beta^2 \tau^4 L^2 + 2\alpha^2 \beta^2 n + \frac{8\alpha^2 \beta^2 \tau^2 n}{1-\beta^2} \Big) \sigma^2. \label{eq:lem-consensus-prf-2}
    \end{align}
    We then sum up~\eqref{eq:lem-consensus-prf-2} over iteration $k$ from $\tau+1$ to $K$ ($K \geq \tau+1$) and obtain
    \begin{align*}
        \MoveEqLeft[0.2] \sum_{k=\tau+1}^K \E_k \big[\big\lVert \hat{\bsx}^{(k)} \big\rVert^2 \big] \\ 
        &\leq 12\alpha^4 \beta^2 \tau^3 n L^2 \sum_{k=\tau+1}^K \sum_{j = k - \tau}^{k-1} \E_k [\|\nabla f(\xbar^{(j-1)})\|^2] + 18\alpha^2 \beta^2 \tau^3 L^2 \sum_{k=\tau+1}^K \sum_{j = k - \tau}^{k-1} \E_k [\|\hat \bsx^{(j-1)}\|^2] \\ 
        &\phantom{\leq} \mathrel{+} 6\alpha^2 \beta^2 \tau^3 L^2 \sum_{k=\tau+1}^K \sum_{j = k - \tau}^{k-1} \E_k [\|\hat \bsx^{(j)}\|^2] + \Big(6\alpha^4 \beta^2 \tau^4 L^2 + 2\alpha^2 \beta^2 n + \frac{8\alpha^2 \beta^2 \tau^2 n}{1-\beta^2} \Big) (K-\tau) \sigma^2.
    \end{align*}
    Adding $\sum_{k=0}^\tau \E_k [\|\hat{\bsx}^{(k)}\|^2]$ and dividing $(K+1)$ on both sides gives
    \begin{align}
        \MoveEqLeft[0.2] \frac{1}{K+1} \sum_{k=0}^K \E_k \big[\big\lVert \hat{\bsx}^{(k)} \big\rVert^2 \big] \nonumber \\
        &\leq \frac{1}{K+1} \sum_{k=0}^\tau \E_k [\|\hat{\bsx}^{(k)}\|^2] + \frac{12\alpha^4 \beta^2 \tau^3 n L^2}{K+1} \sum_{k=\tau+1}^K \sum_{j = k - \tau}^{k-1} \E_k [\|\nabla f(\xbar^{(j-1)})\|^2] \nonumber \\
        &\phantom{\leq} \mathrel{+} \frac{18\alpha^2 \beta^2 \tau^3 L^2}{K+1} \sum_{k=\tau+1}^K \sum_{j = k - \tau}^{k-1} \E_k [\|\hat \bsx^{(j-1)}\|^2] + \frac{6\alpha^2 \beta^2 \tau^3 L^2}{K+1} \sum_{k=\tau+1}^K \sum_{j = k - \tau}^{k-1} \E_k [\|\hat \bsx^{(j)}\|^2] \nonumber \\
        &\phantom{\leq} \mathrel{+} \Big(6\alpha^4 \beta^2 \tau^4 L^2 + 2\alpha^2 \beta^2 n + \frac{8\alpha^2 \beta^2 \tau^2 n}{1-\beta^2} \Big) \frac{K-\tau}{K+1} \sigma^2. \label{eq:lem-consensus-prf-3}
    \end{align}
    Note that for any constant $K \in \natint$ and for any sequence $\{\psi_j\} \subset \reals$, there exists a nonnegative sequence $\{\gamma_j\} \subset \reals_{\geq 0}$ such that $\gamma_j \leq 2\tau$ for all $j \in [K]$ and 
    \[
        \sum_{k=\tau+1}^K \sum^{k-1}_{j=k-\tau} \psi_j = \sum_{k=1}^K \gamma_k \psi_k \leq 2\tau \sum_{k=0}^K \psi_k, \qquad \sum_{k=\tau+1}^K \sum_{j=k-\tau-1}^{k-2} \psi_j = \sum_{k=0}^{K-1} \gamma_k \psi_k \leq 2\tau \sum_{k=0}^K \psi_k.
    \]
    This observation helps simplify the double summation in~\eqref{eq:lem-consensus-prf-3}:
    \begin{align*}
        \MoveEqLeft[0.2] \frac{1}{K+1} \sum_{k=0}^K \E_k \big[\big\lVert \hat{\bsx}^{(k)} \big\rVert^2 \big] \\ 
        &\leq \frac{1}{K+1} \sum_{k=0}^\tau \E_k [\|\hat{\bsx}^{(k)}\|^2] + \frac{24\alpha^4 \beta^2 \tau^4 n L^2}{K+1} \sum_{k=0}^K \E_k [\|\nabla f(\xbar^{(k)})\|^2] \nonumber \\
        &\phantom{\leq} \mathrel{+} \frac{48\alpha^2 \beta^2 \tau^4 L^2}{K+1} \sum_{k=0}^K \E_k [\|\hat \bsx^{(k)}\|^2] + \Big(6\alpha^4 \beta^2 \tau^4 L^2 + 2\alpha^2 \beta^2 n + \frac{8\alpha^2 \beta^2 \tau^2 n}{1-\beta^2} \Big) \frac{K-\tau}{K+1} \sigma^2,
    \end{align*}
    or equivalently,
    \begin{align*}
        \frac{1 - 48\alpha^2 \beta^2 \tau^4 L^2}{K+1} \sum_{k=0}^K \E_k [\|\hat \bsx^{(k)}\|^2] 
        &\leq \frac{1}{K+1} \sum_{k=0}^\tau \E_k [\|\hat{\bsx}^{(k)}\|^2] + \frac{24\alpha^4 \beta^2 \tau^4 n L^2}{K+1} \sum_{k=0}^K \E_k [\|\nabla f(\xbar^{(k)})\|^2] \nonumber \\
        &\phantom{\leq} \mathrel{+} \Big(6\alpha^4 \beta^2 \tau^4 L^2 + 2\alpha^2 \beta^2 n + \frac{8\alpha^2 \beta^2 \tau^2 n}{1-\beta^2} \Big) \frac{K-\tau}{K+1} \sigma^2.
    \end{align*}
    The stepsize condition $\alpha \leq \tfrac{1}{4\sqrt{6} \beta \tau^2 L}$ implies that $1 - 48\alpha^2 \beta^2 \tau^4 L^2 \geq \tfrac{1}{2}$, and thus 
    \begin{align*}
        \frac{1}{K+1} \sum_{k=0}^K \E_k [\|\hat \bsx^{(k)}\|^2] &\leq \frac{2}{K+1} \sum_{k=0}^\tau \E_k [\|\hat{\bsx}^{(k)}\|^2] + \frac{48\alpha^4 \beta^2 \tau^4 n L^2}{K+1} \sum_{k=0}^K \E_k [\|\nabla f(\xbar^{(k)})\|^2] \\ 
        &\phantom{\leq} \mathrel{+} \Big(12\alpha^4 \beta^2 \tau^4 L^2 + 4\alpha^2 \beta^2 n + \frac{16\alpha^2 \beta^2 \tau^2 n}{1-\beta^2} \Big) \frac{K-\tau}{K+1} \sigma^2 \\ 
        &\leq \frac{2}{K+1} \sum_{k=0}^\tau \E_k [\|\hat{\bsx}^{(k)}\|^2] + \frac{n}{192\beta^2 \tau^4 L^2 (K+1)} \sum_{k=0}^K \E_k [\|\nabla f(\xbar^{(k)})\|^2] \\ 
        &\phantom{\leq} \mathrel{+} \Big(\frac{1}{768 \beta^2 \tau^4 L^2} + \frac{n}{24 \tau^4 L^2} + \frac{n}{6 (1-\beta^2) \tau^2 L^2}\Big) \sigma^2,
    \end{align*}
    where the second inequality uses the stepsize condition and relax $\tfrac{K-\tau}{K+1}$ to $1$. Taking the total expectation yields the desired conclusion.
\end{proof}

With \cref{lem:descent,lem:consensus}, we are ready to present the proof of the main result of this paper.
\begin{proof}[Proof of \cref{thm:conv}]
    Combining \cref{lem:descent} with \eqref{eq:lem-consensus} yields
    \begin{align*}
        \MoveEqLeft[0.2] \frac{1}{K+1} \sum_{k=0}^K \Big( \E \|\overline{\nabla f} (\bsx^{(k)})\|^2 + \E \|\nabla f(\xbar^{(k)})\|^2 \Big) \\ 
        &\leq \frac{4L^2}{n(K+1)} \sum_{k=0}^\tau \E [\|\hat \bsx^{(k)}\|^2] + \frac{96 \alpha^4 \beta^2 \tau^4 n L^4}{K+1} \sum_{k=0}^K \E [\|\nabla f(\xbar^{(k)})\|^2] \\
        &\phantom{\leq} \mathrel{+} \frac{4}{\alpha (K+1)} \E \ftilde (\xbar^{(0)}) + \Big(\frac{24\alpha^4 \beta^2 \tau^4 L^4 + 2\alpha L}{n} + 8\alpha^2 \beta^2 L^2 + \frac{32\alpha^2 \beta^2 \tau^2 L^2}{1-\beta^2} \Big) \sigma^2 \\ 
        &\leq \frac{4L^2}{n(K+1)} \sum_{k=0}^\tau \E [\|\hat \bsx^{(k)}\|^2] + \frac{96 \alpha^4 \beta^2 \tau^4 n L^4}{K+1} \sum_{k=0}^K \big(\E \|\overline{\nabla f} (\bsx^{(k)})\|^2 + \E [\|\nabla f(\xbar^{(k)})\|^2] \big) \\
        &\phantom{\leq} \mathrel{+} \frac{4}{\alpha (K+1)} \E \ftilde (\xbar^{(0)}) + \Big(\frac{24\alpha^4 \beta^2 \tau^4 L^4 + 2\alpha L}{n} + 8\alpha^2 \beta^2 L^2 + \frac{32\alpha^2 \beta^2 \tau^2 L^2}{1-\beta^2} \Big) \sigma^2.
    \end{align*}
    Grouping similar terms on the left-hand side gives
    \begin{align*}
        \MoveEqLeft[0.2] \frac{1 - 96 \alpha^4 \beta^2 \tau^4 n L^4}{K+1} \sum_{k=0}^K \big(\E \|\overline{\nabla f} (\bsx^{(k)})\|^2 + \E [\|\nabla f(\xbar^{(k)})\|^2] \big) \\ 
        &\leq \frac{4}{\alpha (K+1)} \E \ftilde (\xbar^{(0)}) + \frac{4L^2}{n(K+1)} \sum_{k=0}^\tau \E [\|\hat \bsx^{(k)}\|^2] \\ 
        &\phantom{\leq} \mathrel{+} \Big(\frac{24\alpha^4 \beta^2 \tau^4 L^4 + 2\alpha L}{n} + 8\alpha^2 \beta^2 L^2 + \frac{32\alpha^2 \beta^2 \tau^2 L^2}{1-\beta^2} \Big) \sigma^2.
    \end{align*}
    The stepsize condition $\alpha \leq \frac{1}{4\sqrt{6} \beta \tau^2 L}$ implies that
    \begin{align}
        \MoveEqLeft[0.2] \frac{1}{K+1} \sum_{k=0}^K \big(\E \|\overline{\nabla f} (\bsx^{(k)})\|^2 + \E [\|\nabla f(\xbar^{(k)})\|^2] \big) \nonumber \\ 
        &\leq \frac{8}{\alpha (K+1)} \E \ftilde (\xbar^{(0)}) + \frac{8L^2}{n(K+1)} \sum_{k=0}^\tau \E [\|\hat \bsx^{(k)}\|^2] \nonumber \\ 
        &\phantom{\leq} \mathrel{+} \Big(\frac{48\alpha^4 \beta^2 \tau^4 L^4 + 4\alpha L}{n} + 16\alpha^2 \beta^2 L^2 + \frac{64\alpha^2 \beta^2 \tau^2 L^2}{1-\beta^2} \Big) \sigma^2 \nonumber \\ 
        &\leq \frac{32\sqrt{6} \beta \tau^2 L}{K+1} \E \ftilde (\xbar^{(0)}) + \frac{8L^2}{n(K+1)} \sum_{k=0}^\tau \E [\|\hat \bsx^{(k)}\|^2] + \frac{\sigma^2}{192 \beta^2 \tau^4 n} + \frac{\sigma^2}{\sqrt{6} \beta \tau^2 n} + \frac{\sigma^2}{6\tau^4} + \frac{2 \sigma^2}{3 (1-\beta^2) \tau^2}, \label{eq:thm-prf}
    \end{align}
    where in the last step we plug in the stepsize condition.
\end{proof}

Now we state the proof of \cref{cor:conv}. To improve the readability of the proof, from now on, we use the notation $\lesssim$ to hide irrelevant constants. The notation $a \lesssim b$ means that there exists a positive constant $\gamma \in \reals_{>0}$ such that $a \leq \gamma b$. In our case, the important quantities that we keep are $\alpha$, $\beta$, $\tau$, $n$, $L$, and $\sigma$.
\begin{proof}[Proof of \cref{cor:conv}]
    From the stepsize condition $\alpha \leq \min \big\{\tfrac{1}{2L}, \tfrac{1}{4\sqrt{6} \beta \tau^2 L}\big\}$, the inequality~\eqref{eq:thm-prf} becomes
    \begin{align}
        \MoveEqLeft[0.2] \frac{1}{K+1} \sum\limits_{k=0}^K \big(\E \|\overline{\nabla f} (\bsx^{(k)})\|^2 + \E [\|\nabla f(\xbar^{(k)})\|^2] \big) \nonumber \\ 
        &\lesssim \frac{L^2}{\alpha K} + \frac{\alpha L \sigma^2}{n} + \frac{\alpha^4 \beta^2 \tau^4 L^4 \sigma^2}{n} + \frac{\alpha^2 \beta^2 \tau^2 L^2 \sigma^2}{1-\beta^2} \nonumber \\ 
        &\lesssim \frac{L^2}{\alpha K} + \frac{\alpha L \sigma^2}{n} + \frac{\alpha^2 \alpha^2 \beta^2 \tau^4 L^4 \sigma^2}{n} + \frac{\alpha^2 \beta^2 \tau^2 L^2 \sigma^2}{1-\beta^2} \nonumber \\ 
        &\lesssim \frac{L^2}{\alpha K} + \frac{\alpha L \sigma^2}{n} + \frac{\alpha^2 L^2 \sigma^2}{n} + \frac{\alpha^2 \beta^2 \tau^2 L^2 \sigma^2}{1-\beta^2} \nonumber \\ 
        &\lesssim \frac{L^2}{\alpha K} + \frac{\alpha L \sigma^2}{n} + \frac{\alpha^2 \beta^2 \tau^2 L^2 \sigma^2}{1-\beta^2}, \label{eq:cor-prf-1} \\ 
        &\lesssim \frac{c_1}{\alpha K} + c_2 \alpha + c_3 \alpha^2, \nonumber
    \end{align}
    where $c_1 = L^2$, $c_2 = \tfrac{L\sigma^2}{n}$, and $c_3 = \frac{\beta^2 \tau^2 L^2 \sigma^2}{1-\beta^2}$. In~\eqref{eq:cor-prf-1} we use the assumption that $\tfrac{1}{n} \ll \beta^2 \tau^2$. Now we set the stepsize $\alpha$ as in \eqref{eq:cor-alpha}. (Note that by definition, this choice of $\alpha$ satisfies the condition in \cref{thm:conv}.) We then discuss the following three cases.
    \begin{enumerate}
        \item If $\alpha = \min \left\{\frac{1}{2L}, \frac{1}{4\sqrt{6} \beta \tau^2 L}\right\} \leq \min\left\{ \left(\tfrac{c_1}{c_2 K} \right)^{\frac{1}{2}}, \left(\frac{c_1}{c_3 K} \right)^{\frac{1}{2}} \right\}$, then
        \[
            \frac{c_1}{\alpha K} + c_2 \alpha + c_3 \alpha^2 \lesssim \frac{c_1}{\alpha K} + \left(\frac{c_1 c_2}{K} \right)^{\frac{1}{2}} + \left(\frac{c_1 c_3}{K} \right)^{\frac{1}{2}}.
        \]

        \item If $\alpha = \left(\frac{c_1}{c_2 K} \right)^{\frac{1}{2}} \leq \left(\frac{c_1}{c_3 K} \right)^{\frac{1}{2}}$, then
        \[
            \frac{c_1}{\alpha K} + c_2 \alpha + c_3 \alpha^2 \lesssim \left(\frac{c_1 c_2}{K} \right)^{\frac{1}{2}} + \left(\frac{c_1 c_3}{K} \right)^{\frac{1}{2}}.
        \]

        \item If $\alpha = \left(\frac{c_1}{c_3 K} \right)^{\frac{1}{2}} \leq \left(\frac{c_1}{c_2 K} \right)^{\frac{1}{2}}$, then
        \[
            \frac{c_1}{\alpha K} + c_2 \alpha + c_3 \alpha^2 \lesssim \left(\frac{c_1 c_2}{K} \right)^{\frac{1}{2}} + \left(\frac{c_1 c_3}{K} \right)^{\frac{1}{2}}.
        \]
    \end{enumerate}
    Combing all three cases yields
    \[
        \frac{c_1}{\alpha K} + c_2 \alpha + c_3 \alpha^2 \lesssim \frac{c_1}{\alpha K} + \left(\frac{c_1 c_2}{K} \right)^{\frac{1}{2}} + \left(\frac{c_1 c_3}{K} \right)^{\frac{1}{2}} \lesssim \frac{\beta \tau^2 L^3}{K} + \frac{L^{\frac{3}{2}} \sigma}{(nK)^{\frac{1}{2}}} + \frac{\beta \tau L^2 \sigma}{(1-\beta^2)^{\frac{1}{2}} K^{\frac{1}{2}}}. \qedhere
    \]
\end{proof}

\section{Numerical experiments} \label{sec:exp}

This numerical experiments presented in this section illustrate how GT-PGA accelerates practical convergence compared to vanilla GT and LU-GT. We apply GT-PGA to solve the least squares problem with a nonconvex regularization term:
\begin{equation} \label{eq:exp-prob}
    \mini \quad \frac{1}{n} \sum_{i=1}^n \|A_i x - b_i\|_2^2 + \lambda \sum_{j=1}^d \frac{x[j]}{1+x[j]},
\end{equation}
where the decision variable is $x \in \reals^d$, $x[j]$ is the $j$th component of~$x$, and $\{(A_i,b_i)\}_{i=1}^n \subset \reals^{m_i \times d} \times \reals^{m_i}$ are the local data held by agent~$i$. In our simulation, we set $\lambda=0.01$, $n=64$, $d=20$, $m_i = 500$ for all $i \in [n]$, and the entries of each $A_i$ are drawn independently from standard Gaussian distribution. For all $i \in [n]$, we randomly generate $\xtilde_i \in \reals^d$ and set $b_i = A_i \xtilde_i + z_i$, where $z_i \sim \cN(0, 0.01)$ are drawn independently. We test \cref{alg:gt} on various topologies, including the ring graph, 2D-MeshGrid, star graph, and static hypercuboid \cite{NJYU23}.
\begin{figure}[!htb]
\centering
\begin{subfigure}{.5\columnwidth}
    \centering
    \includegraphics[width=\linewidth]{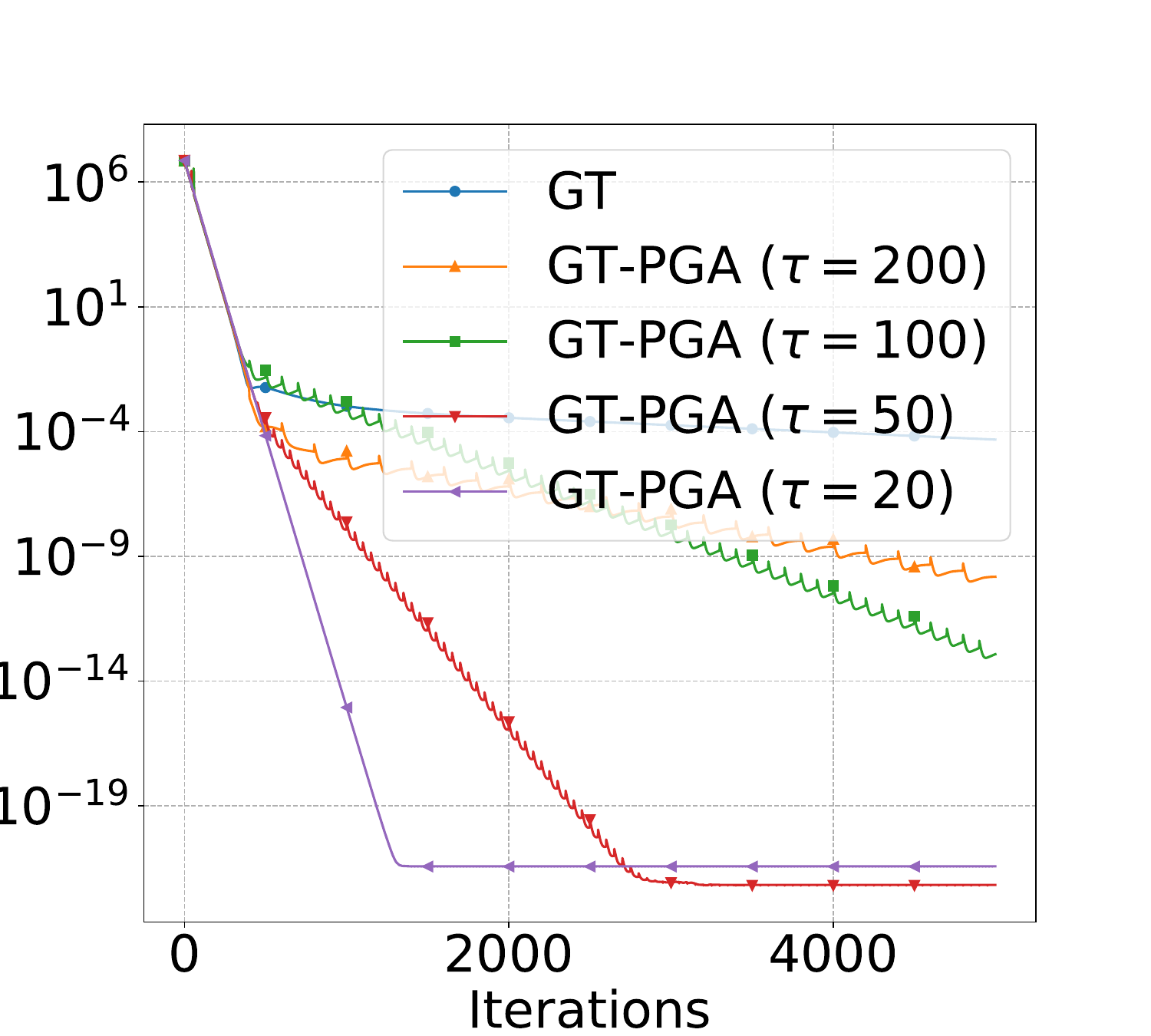}
    \caption{Ring.}
    \label{fig:Ring}
\end{subfigure}%
\begin{subfigure}{.5\columnwidth}
    \centering
    \includegraphics[width=\linewidth]{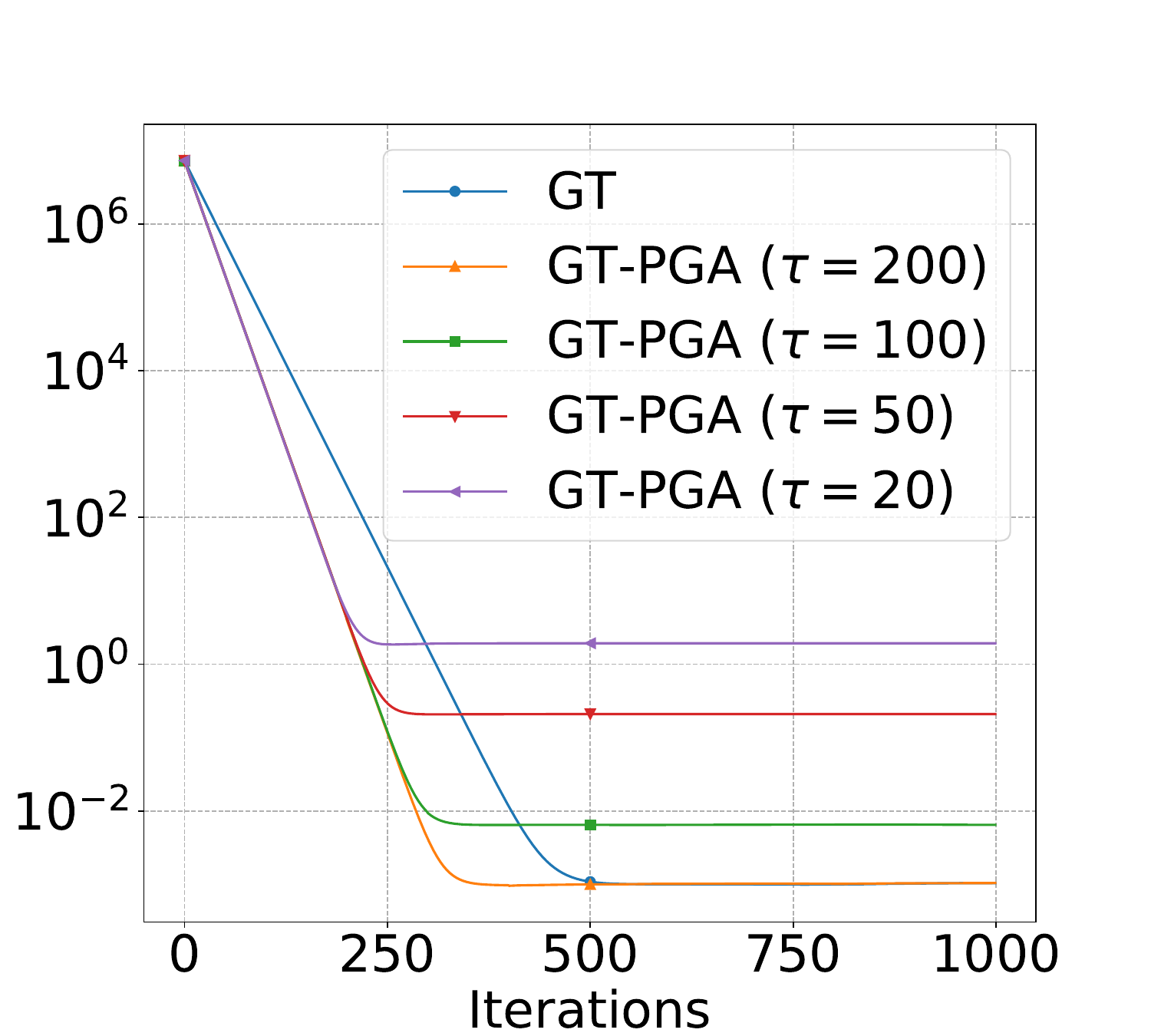}
    \caption{2D-MeshGrid.}
    \label{fig:2D-MeshGrid}
\end{subfigure}
\begin{subfigure}{.5\columnwidth}
    \centering
    % include third image
    \includegraphics[width=\linewidth]{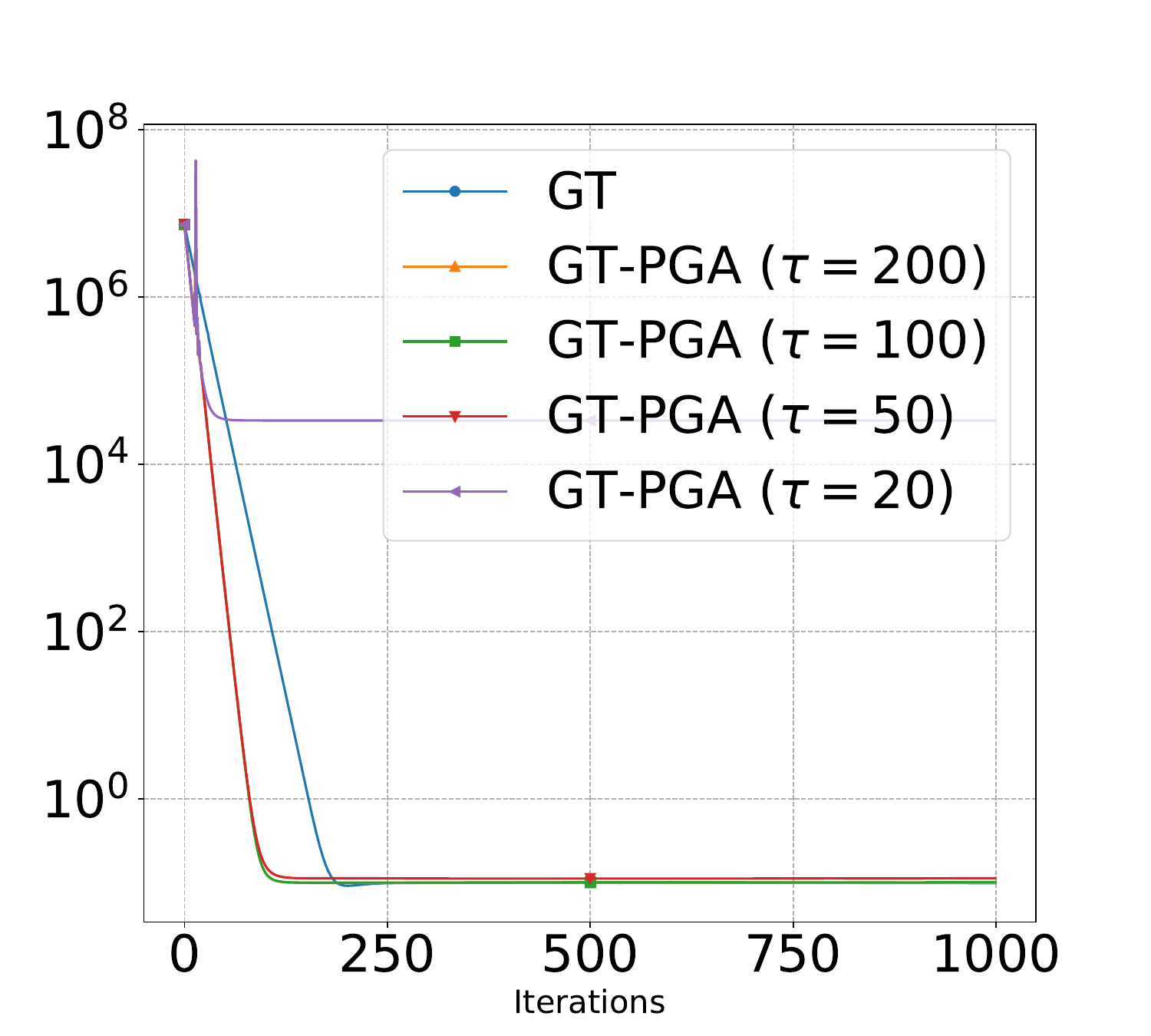}
    \caption{Star.}
    \label{fig:Star-Graph}
\end{subfigure}%
\begin{subfigure}{.5\columnwidth}
    \centering
    \includegraphics[width=\linewidth]{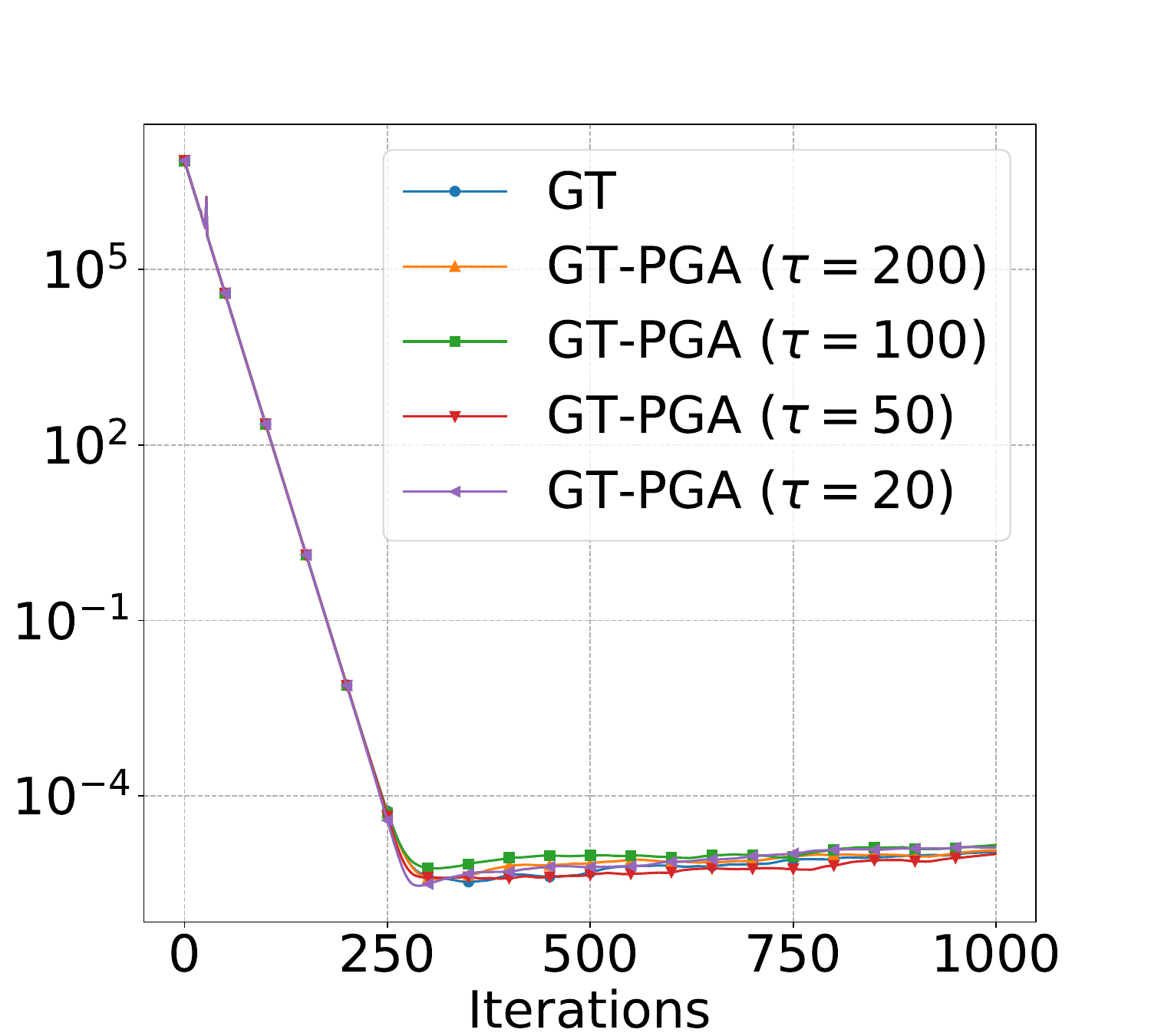}
    \caption{Static hypercuboid.}
    \label{fig:Static-Hypercuboid}
\end{subfigure}
\caption{Performance of GT-PGA for solving~\eqref{eq:exp-prob} with various topologies. The plots report $ \|\overline{\nabla f} (\bsx^{(k)})\|^2 +  \|\nabla f(\xbar^{(k)})\|^2$, and use the ring, 2D-MeshGrid, the star graph, and the static hypercuboid, respectively. Different curves on each figure use different PGA period, \ie, $\tau=20, 50, 100, 200$, and $\infty$ (equivalent to vanilla GT).}
\label{fig:result}
\end{figure}

The simulation results depicted in \cref{fig:result} show the superiority of the PGA operation and align with the theoretical insight from \cref{thm:conv} and \cref{cor:conv}. For the sparse ring graph (\cref{fig:Ring}), PGA helps reduce the stochastic noise caused by stochastic gradients, and GT-PGA converges to a more accurate solution compared to vanilla GT. For 2D-MeshGrid and the star graph, GT-PGA exhibits a better practical performance in terms of convergence rate. For static hyper-cuboids, the benefit of GT-PGA is marginal, potentially because of the desirable properties of static hyper-cuboids \cite{NJYU23}. The simulation results in \cref{fig:result} also reveal the inherent properties of graphs: sparser graphs require more frequent global averaging operations. To provide a concrete example, each node in the ring graph has fewer neighbors compared to 2D MeshGrid, so information is harder to be evenly distributed across nodes through communication. As a result, a higher frequency of global averaging operations tends to yield better performance for the ring graph.

The simulation results shown in \cref{fig:result-LU-GT} demonstrate that the GT-PGA algorithm significantly outperforms LU-GT across all topologies, which is consistent with the theoretical insights from \cref{thm:conv}.

\begin{figure}[!htb]
\centering
\begin{subfigure}{.5\columnwidth}
    \centering
    \includegraphics[width=\linewidth]{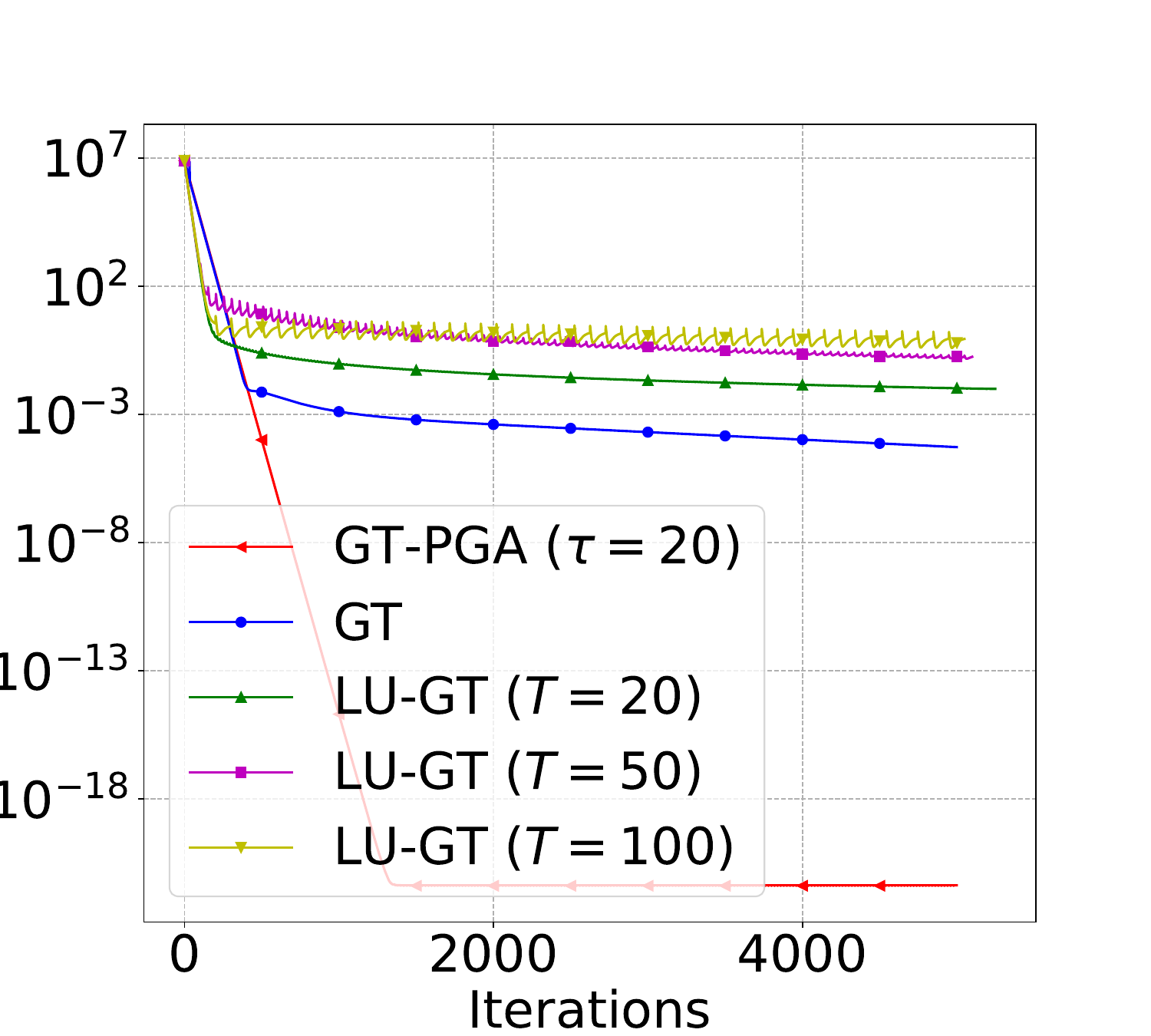}
    \caption{Ring.}
    \label{fig:Ring-LU-GT}
\end{subfigure}%
\begin{subfigure}{.5\columnwidth}
    \centering
    \includegraphics[width=\linewidth]{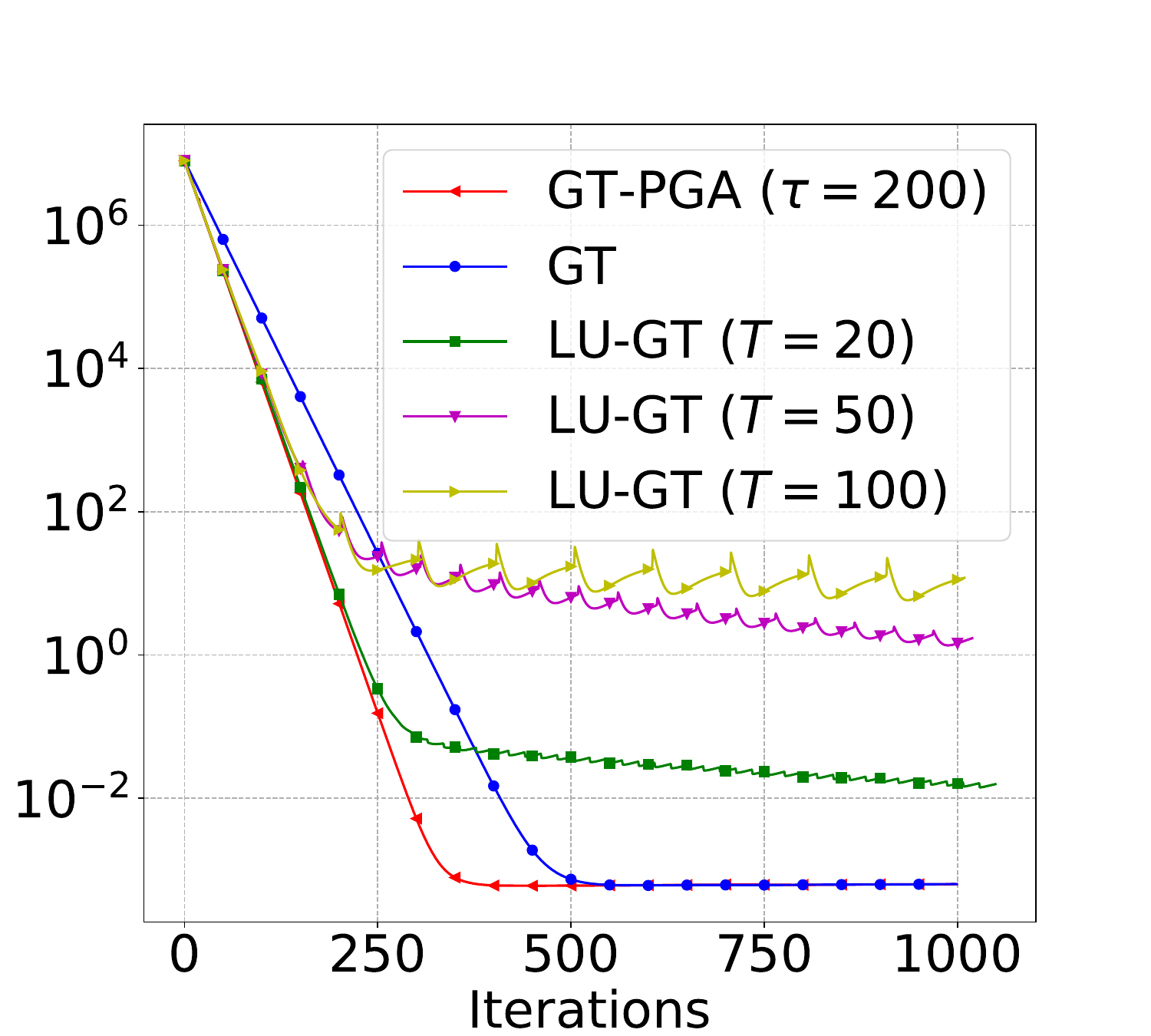}
    \caption{2D-MeshGrid.}
    \label{fig:2D-MeshGrid-LU-GT}
\end{subfigure}
\begin{subfigure}{.5\columnwidth}
    \centering
    % include third image
    \includegraphics[width=\linewidth]{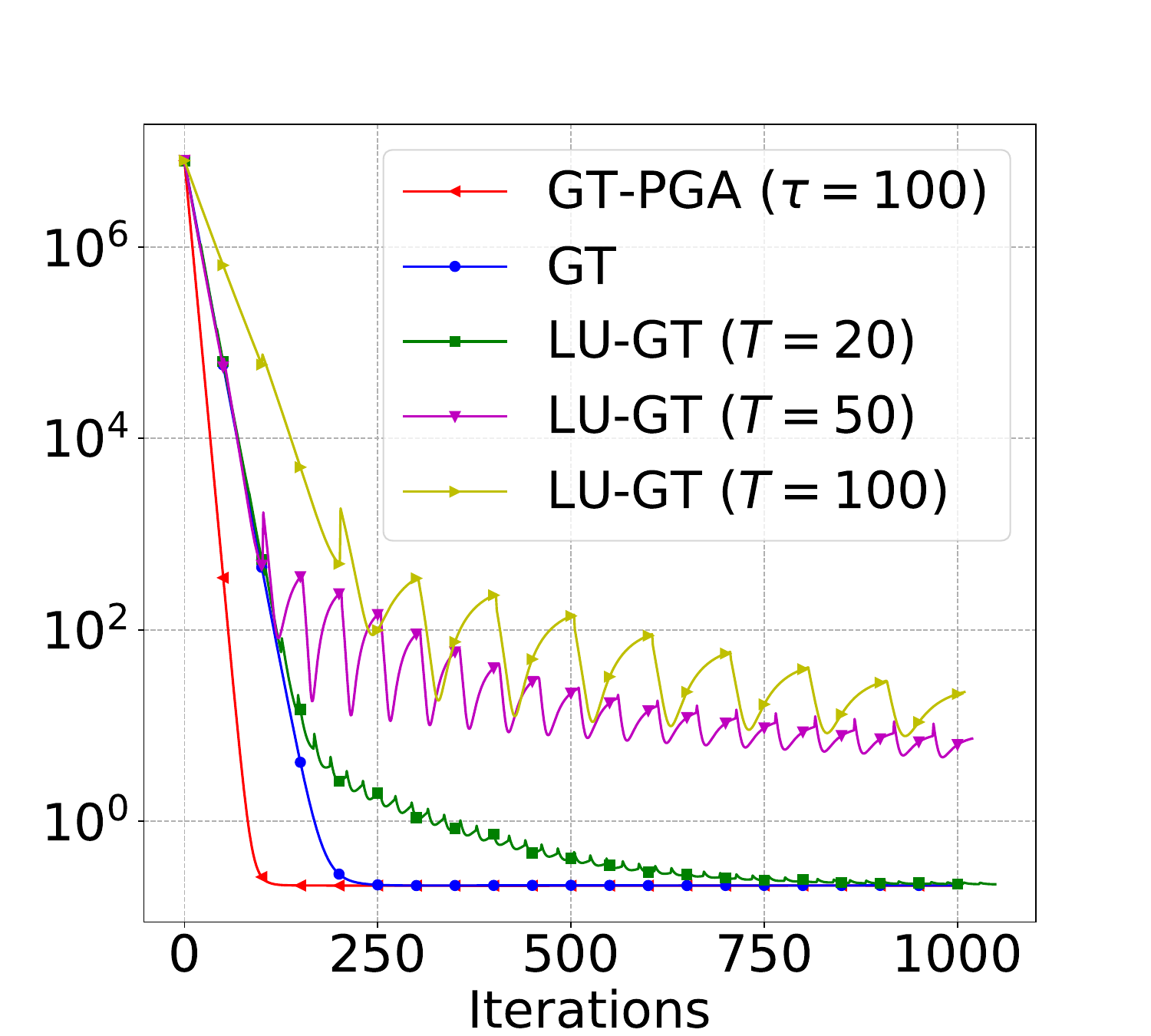}
    \caption{Star.}
    \label{fig:Star-Graph-LU-GT}
\end{subfigure}%
\begin{subfigure}{.5\columnwidth}
    \centering
    \includegraphics[width=\linewidth]{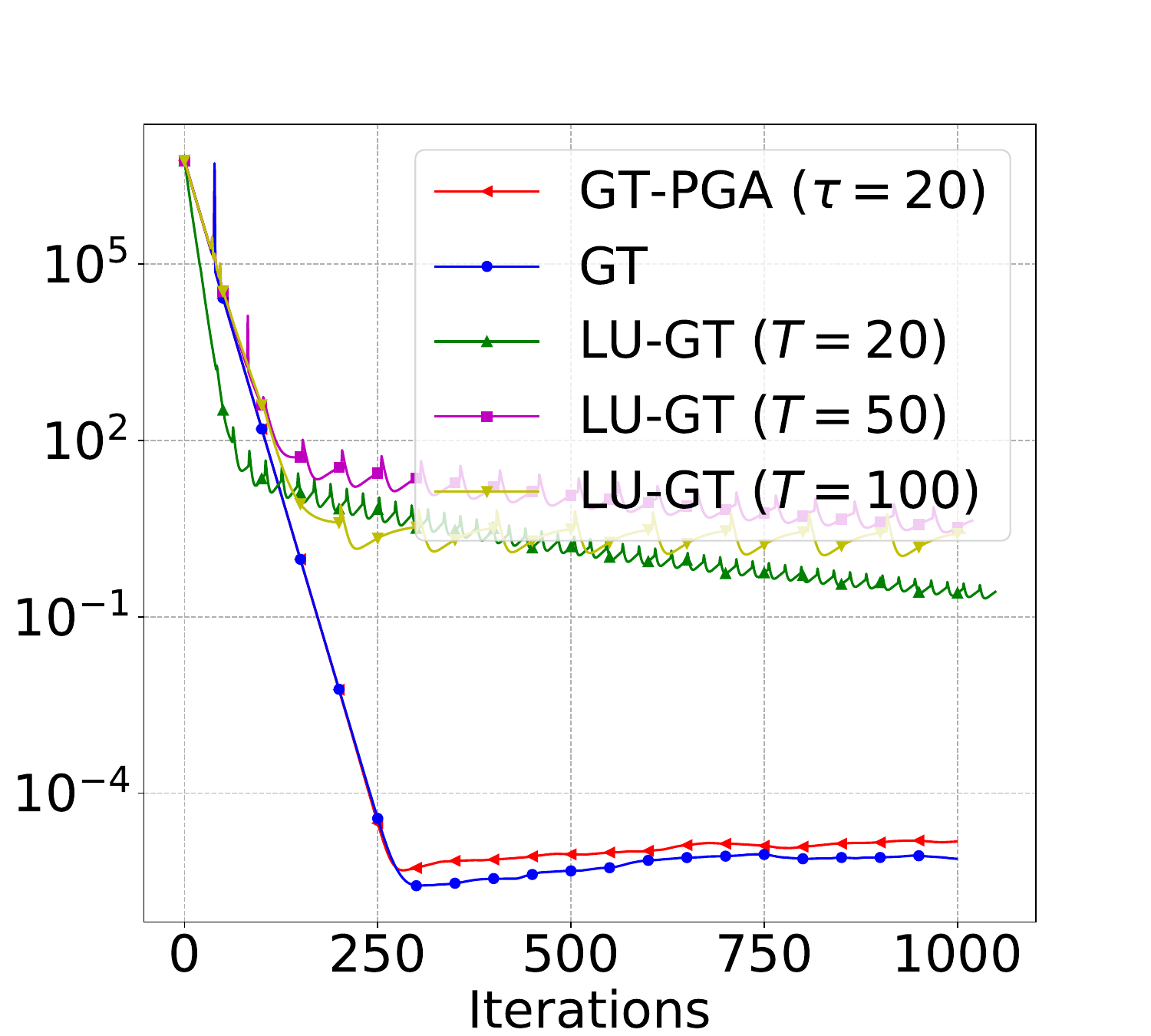}
    \caption{Static hypercuboid.}
    \label{fig:Static-Hypercuboid-LU-GT}
\end{subfigure}
\caption{Comparison of GT-PGA and LU-GT across various topologies, including the ring, 2D MeshGrid, star graph, and static hypercuboid. The plots also drew $ \|\overline{\nabla f} (\bsx^{(k)})\|^2 +  \|\nabla f(\xbar^{(k)})\|^2$. Different curves on each figure represent that the algorithm performs T computations in Local Updates step, \ie, $T=1$ (equivalent to vanilla GT), $20, 50, 100$\cite{BBG23a}. }
\label{fig:result-LU-GT}
\end{figure}

\section{Conclusion} \label{sec:conclusion}

We incorporate periodic global averaging (PGA) into Gradient Tracking (GT) and propose a new decentralized algorithm GT-PGA. We establish convergence guarantees for GT-PGA under the stochastic, nonconvex setting and showcase the superiority of GT-PGA compared with vanilla GT. Numerical results validate the improvements in practical convergence due to the proposed periodic global averaging operation. While we focus on the nonconvex setting (due to space constraints), it is straightforward to extend our analysis to the convex setting.

In this work, we focus on a specific form of GT \cite{DS16}. It is not clear whether PGA can be incorporated into other forms of GT and whether a unified analysis similar to, \eg, \cite{AY22}, still holds. Moreover, the connection (or difference) between PGA and multi-consensus is still unknown, and further analysis is needed to quantify the trade-off between these two techniques.

\bibliography{reference}

\begin{thebibliography}{10}

\bibitem{Alghunaim24}
Sulaiman~A. Alghunaim.
\newblock Local exact-diffusion for decentralized optimization and learning.
\newblock {\em IEEE Transactions on Automatic Control}, 2024.

\bibitem{AY22}
Sulaiman~A. Alghunaim and Kun Yuan.
\newblock A unified and refined convergence analysis for non-convex decentralized learning.
\newblock {\em IEEE Transactions on Signal Processing}, 70:3264--3279, 2022.

\bibitem{BBG23a}
Albert~S. Berahas, Raghu Bollapragada, and Shagun Gupta.
\newblock Balancing communication and computation in gradient tracking algorithms for decentralized optimization.
\newblock {\em arXiv e-prints arXiv:2303.14289}, 2023.

\bibitem{BPC+11}
Stephen~P. Boyd, Neal Parikh, Eric Chu, Borja Peleato, and Jonathan Eckstein.
\newblock Distributed optimization and statistical learning via alternating direction method of multipliers.
\newblock {\em Foundations and Trends in Machine Learning}, 3(1):1--122, 2011.

\bibitem{CYRC12}
Yongcan Cao, Wenwu Yu, Wei Ren, and Guanrong Chen.
\newblock An overview of recent progress in the study of distributed multi-agent coordination.
\newblock {\em IEEE Transactions on Industrial informatics}, 9(1):427--438, 2012.

\bibitem{CS10}
Federico~S. Cattivelli and Ali~H. Sayed.
\newblock Diffusion {LMS} strategies for distributed estimation.
\newblock {\em IEEE Transactions on Signal Processing}, 58(3):1035--1048, 2010.

\bibitem{CS13}
Jianshu Chen and Ali~H. Sayed.
\newblock Distributed {Pareto} optimization via diffusion strategies.
\newblock {\em IEEE Journal of Selected Topics in Signal Processing}, 7(2):205--220, 2013.

\bibitem{CYZ+21}
Yiming Chen, Kun Yuan, Yingya Zhang, Pan Pan, Yinghui Xu, and Wotao Yin.
\newblock Accelerating gossip {SGD} with periodic global averaging.
\newblock In {\em International Conference on Machine Learning}, pages 1791--1802, 2021.

\bibitem{DS16}
Paolo Di~Lorenzo and Gesualdo Scutari.
\newblock Next: In-network nonconvex optimization.
\newblock {\em IEEE Transactions on Signal and Information Processing over Networks}, 2(2):120--136, 2016.

\bibitem{GC23}
Songyang Ge and Tsung-Hui Chang.
\newblock Gradient tracking with multiple local {SGD} for decentralized non-convex learning.
\newblock In {\em 62nd IEEE Conference on Decision and Control (CDC)}, pages 133--138, 2023.

\bibitem{GHR21}
Eduard Gorbunov, Filip Hanzely, and Peter Richt{\'a}rik.
\newblock Local {SGD}: {Unified} theory and new efficient methods.
\newblock In {\em International Conference on Artificial Intelligence and Statistics}, volume 130, pages 3556--3564, 2021.

\bibitem{KKM+20}
Sai~Praneeth Karimireddy, Satyen Kale, Mehryar Mohri, Sashank Reddi, Sebastian~U. Stich, and Ananda~Theertha Suresh.
\newblock {SCAFFOLD}: Stochastic controlled averaging for federated learning.
\newblock In {\em International Conference on Machine Learning}, pages 5132--5143, 2020.

\bibitem{KMR20}
Ahmed Khaled, Konstantin Mishchenko, and Peter Richt{\'a}rik.
\newblock Tighter theory for local {SGD} on identical and heterogeneous data.
\newblock In {\em International Conference on Artificial Intelligence and Statistics}, pages 4519--4529, 2020.

\bibitem{KLB+20}
Anastasia Koloskova, Nicolas Loizou, Sadra Boreiri, Martin Jaggi, and Sebastian~U. Stich.
\newblock A unified theory of decentralized {SGD} with changing topology and local updates.
\newblock In {\em International Conference on Machine Learning}, pages 5381--5393, 2020.

\bibitem{LSY19}
Zhi Li, Wei Shi, and Ming Yan.
\newblock A decentralized proximal-gradient method with network independent step-sizes and separated convergence rates.
\newblock {\em IEEE Transactions on Signal Processing}, 67(17):4494--4506, 2019.

\bibitem{LLKS23}
Yue Liu, Tao Lin, Anastasia Koloskova, and Sebastian~U. Stich.
\newblock Decentralized gradient tracking with local steps.
\newblock {\em arXiv e-prints arXiv:2301.01313}, 2023.

\bibitem{MMSR22}
Konstantin Mishchenko, Grigory Malinovsky, Sebastian~U. Stich, and Peter Richt{\'a}rik.
\newblock {ProxSkip: Yes! Local gradient steps provably lead to communication acceleration! Finally!}
\newblock In {\em International Conference on Machine Learning}, 2022.

\bibitem{MJPH21}
Aritra Mitra, Rayana Jaafar, George~J. Pappas, and Hamed Hassani.
\newblock Linear convergence in federated learning: {Tackling} client heterogeneity and sparse gradients.
\newblock In {\em Advances in Neural Information Processing Systems}, 2021.

\bibitem{NO09}
A.~Nedi{\'c} and A.~Ozdaglar.
\newblock Distributed subgradient methods for multi-agent optimization.
\newblock {\em IEEE Transactions on Automatic Control}, 54(1):48--61, 2009.

\bibitem{NL18}
Angelia Nedi{\'c} and Ji~Liu.
\newblock Distributed optimization for control.
\newblock {\em Annual Review of Control, Robotics, and Autonomous Systems}, 1:77--103, 2018.

\bibitem{NOR18}
Angelia Nedi{\'c}, Alex Olshevsky, and Michael~G Rabbat.
\newblock Network topology and communication-computation tradeoffs in decentralized optimization.
\newblock {\em Proceedings of the IEEE}, 106(5):953--976, 2018.

\bibitem{NOS17}
Angelia Nedi{\'c}, Alex Olshevsky, and Wei Shi.
\newblock Achieving geometric convergence for distributed optimization over time-varying graphs.
\newblock {\em SIAM Journal on Optimization}, 27(4):2597--2633, 2017.

\bibitem{NAYU23}
Edward Duc~Hien Nguyen, Sulaiman~A. Alghunaim, Kun Yuan, and C{\'e}sar~A. Uribe.
\newblock On the performance of gradient tracking with local updates.
\newblock In {\em 62nd {{IEEE Conference}} on {{Decision}} and {{Control}} ({{CDC}})}, pages 4309--4313, 2023.

\bibitem{NJYU23}
Edward Duc~Hien Nguyen, Xin Jiang, Bicheng Ying, and C{\'e}sar~A. Uribe.
\newblock On graphs with finite-time consensus and their use in gradient tracking.
\newblock {\em arXiv e-prints arXiv:2311.01317}, 2023.

\bibitem{PY09}
Pitch Patarasuk and Xin Yuan.
\newblock Bandwidth optimal all-reduce algorithms for clusters of workstations.
\newblock {\em Journal of Parallel and Distributed Computing}, 69(2):117--124, 2009.

\bibitem{PKP06}
Joel~B. Predd, Sanjeev~B. Kulkarni, and Vincent~H. Poor.
\newblock Distributed learning in wireless sensor networks.
\newblock {\em IEEE Signal Processing Magazine}, 23(4):56--69, 2006.

\bibitem{QL18}
Guannan Qu and Na~Li.
\newblock Harnessing smoothness to accelerate distributed optimization.
\newblock {\em IEEE Transactions on Control of Network Systems}, 5(3):1245--1260, 2018.

\bibitem{SLWY15}
Wei Shi, Qing Ling, Gang Wu, and Wotao Yin.
\newblock {EXTRA}: An exact first-order algorithm for decentralized consensus optimization.
\newblock {\em SIAM Journal on Optimization}, 25(2):944--966, 2015.

\bibitem{SLJ+22}
Zhuoqing Song, Weijian Li, Kexin Jin, Lei Shi, Ming Yan, Wotao Yin, and Kun Yuan.
\newblock {Communication-efficient topologies for decentralized learning with {$O(1)$} consensus rate}.
\newblock In {\em Advances in Neural Information Processing Systems}, volume~35, pages 1073--1085, 2022.

\bibitem{Stich18}
Sebastian~U. Stich.
\newblock Local {SGD} converges fast and communicates little.
\newblock In {\em International Conference on Learning Representations}, 2019.

\bibitem{RNV10}
Srinivasan Sundhar~Ram, Angelia Nedi{\'c}, and Venugopal~V. Veeravalli.
\newblock Distributed stochastic subgradient projection algorithms for convex optimization.
\newblock {\em Journal of Optimization Theory and Applications}, 147(3):516--545, 2010.

\bibitem{TLY+18}
Hanlin Tang, Xiangru Lian, Ming Yan, Ce~Zhang, and Ji~Liu.
\newblock {D$^2$}: {D}ecentralized training over decentralized data.
\newblock In {\em International Conference on Machine Learning}, pages 4848--4856, 2018.

\bibitem{XZSX15}
Jinming Xu, Shanying Zhu, Yeng~Chai Soh, and Lihua Xie.
\newblock Augmented distributed gradient methods for multi-agent optimization under uncoordinated constant stepsizes.
\newblock In {\em Proceedings of 54th IEEE Conference on Decision and Control (CDC)}, pages 2055--2060, 2015.

\bibitem{YYH+21}
Bicheng Ying, Kun Yuan, Hanbin Hu, Yiming Chen, and Wotao Yin.
\newblock {BlueFog: Make decentralized algorithms practical for optimization and deep learning}.
\newblock {\em arXiv e-prints}, arXiv:2111.04287, 2021.

\bibitem{YAYS20}
Kun Yuan, Sulaiman~A. Alghunaim, Bicheng Ying, and Ali~H. Sayed.
\newblock On the influence of bias-correction on distributed stochastic optimization.
\newblock {\em IEEE Transactions on Signal Processing}, 68:4352--4367, 2020.

\bibitem{YLY16}
Kun Yuan, Qing Ling, and Wotao Yin.
\newblock On the convergence of decentralized gradient descent.
\newblock {\em SIAM Journal on Optimization}, 26(3):1835--1854, 2016.

\bibitem{YYZS18}
Kun Yuan, Bicheng Ying, Xiaochuan Zhao, and Ali~H. Sayed.
\newblock {Exact diffusion for distributed optimization and learning—Part I: Algorithm development}.
\newblock {\em IEEE Transactions on Signal Processing}, 67(3):708--723, 2018.

\end{thebibliography}

\end{document}